\documentclass[10pt,reqno]{amsart}
\usepackage{amsmath}
\usepackage{amssymb}
\usepackage{amsthm}
\usepackage{eepic,epic}
\usepackage{epsfig}
\usepackage{graphicx}


\textheight 20 true cm \textwidth 13 true cm \voffset 1.2 true cm
\hoffset -0.5 true cm \marginparwidth 2 true cm
\parindent 0.5 true cm


\newlength{\defbaselineskip}
\setlength{\defbaselineskip}{\baselineskip}
\newcommand{\setlinespacing}[1]%
           {\setlength{\baselineskip}{#1 \defbaselineskip}}

\numberwithin{equation}{section}

\newtheorem{thm}{Theorem}[section]
\newtheorem{cor}[thm]{Corollary}
\newtheorem{lem}[thm]{Lemma}
\newtheorem{prop}[thm]{Proposition}

\theoremstyle{definition}

\theoremstyle{remark}
\newtheorem{rem}[thm]{Remark}
\numberwithin{equation}{section}

\begin{document}

\title[Morawetz estimates with time-dependent weights]
{On Morawetz estimates with time-dependent weights for the Klein-Gordon equation}

\author{Jungkwon Kim, Hyeongjin Lee, Ihyeok Seo and Jihyeon Seok}

\thanks{This research was supported by NRF-2019R1F1A1061316.}

\subjclass[2010]{Primary: 35B45; Secondary: 35Q40}
\keywords{Morawetz estimates, Klein-Gordon equation.}

\address{Department of Mathematics, Sungkyunkwan University, Suwon 16419, Republic of Korea}
\email{kimjk809@skku.edu}
\email{hjinlee@skku.edu}
\email{ihseo@skku.edu}
\email{jseok@skku.edu}

\begin{abstract}
We obtain some new Morawetz estimates for the Klein-Gordon flow of the form
\begin{equation*}
\big\||\nabla|^{\sigma} e^{it \sqrt{1-\Delta}}f \big\|_{L^2_{x,t}(|(x,t)|^{-\alpha})} \lesssim \|f\|_{H^s}
\end{equation*}
where $\sigma,s\geq0$ and $\alpha>0$.
The conventional approaches to Morawetz estimates with $|x|^{-\alpha}$
are no longer available in the case of time-dependent weights $|(x,t)|^{-\alpha}$.
Here we instead apply the Littlewood-Paley theory with Muckenhoupt $A_2$ weights to frequency localized estimates thereof
that are obtained by making use of the bilinear interpolation between their bilinear form estimates
which need to carefully analyze some relevant oscillatory integrals
according to the different scaling of $\sqrt{1-\Delta}$ for low and high frequencies.
\end{abstract}

\maketitle

\section{Introduction}\label{sec1}

In this paper we are concerned with weighted $L^2$ estimates for the Klein-Gordon flow $e^{it\sqrt{1-\Delta}}$
which gives a solution formula (see \eqref{sol}) for the Klein-Gordon equation $\partial_{t}^2 u-\Delta u+u = 0$.
Let us first consider
\begin{equation}\label{mor}
\big\||\nabla|^{\sigma} e^{it\sqrt{1-\Delta}}f\big\|_{L_{x,t}^2(|x|^{-\alpha})} \lesssim \|f\|_{H^{1/2}},
\end{equation}
where $\sigma\in\mathbb{R}$ and $\alpha>0$.
This type of estimates was firstly initiated by Morawetz \cite{M} and is sometimes called Morawetz estimates.
Especially, the case $\sigma>0$ in the setting \eqref{mor} is indicative of a smoothing effect.

Let $n \ge 2$.
In \cite{OR}, Ozawa and Rogers showed that \eqref{mor} holds for $-n/2 +1 <\sigma<1/2$ and $\alpha=2-2\sigma$.
Here, the region of $(\alpha,\sigma)$ when $\sigma>0$ is given by the open segment $(A,B)$ (see Figure \ref{fig1} below).
Recently, it was shown by D'Ancona \cite{D'A2} (see also \cite{LSS}) that the flow can have much smoothness locally as
\begin{equation}\label{LS0}
\sup_{R>0} \frac{1}{R} \int_{|x|<R} \int_{-\infty}^\infty
\big||\nabla|^{1/2}e^{it\sqrt{1-\Delta}}f\big|^2 dtdx \lesssim \|f\|_{H^{1/2}}^2.
\end{equation}
At this point, it should be noted that this local smoothing estimate can be written in terms of a weighted $L^2$ setting as in \eqref{mor}.
Indeed, if $\rho$  is any function such that $\sum_{j\in\mathbb{Z}}\|\rho\|_{L^\infty(|x|\sim2^j)}^2<\infty$,
one has
$$\big\|\rho|x|^{-1/2}h\big\|_{L_{x,t}^2}\lesssim\sup_{R>0} \frac{1}{R} \int_{|x|<R} \int_{-\infty}^\infty|h|^2 dtdx,$$
and then \eqref{LS0} implies a weaker estimate,
\begin{equation}\label{this}
\big\||\nabla|^{1/2}e^{it\sqrt{1-\Delta}}f\big\|_{L_{x,t}^2(\rho^2|x|^{-1})} \lesssim \|f\|_{H^{1/2}}^2.
\end{equation}
A typical example of such $\rho$ is given by
$$\rho=(1+(\log|x|)^2)^{-\frac12(\frac12+\varepsilon)},\quad \varepsilon>0,$$
as mentioned in \cite{D'A2}.
But this case does not cover the point $A$ because the weight $\rho^2|x|^{-1}$ in \eqref{this}
decays faster than $|x|^{-1}$.

The smoothing effect in the setting \eqref{mor} can occur due to the decay of the weight $|x|^{-\alpha}$ in the spatial direction.
As shown in Figure \ref{fig1}, the smoothing factor $\sigma$ is indeed related with the decay factor $\alpha$.
At this point, we naturally further ask how much regularity we can expect when considering decay
not in the spatial direction but in the space-time direction like $|(x,t)|^{-\alpha}$.
In other words, we expect a smoothing effect above the line through the points $A,B$ in this new setting (see Remark \ref{rem} below).
Our result is the following theorem:

\begin{thm}\label{thm}
Let  $n \ge 1$ and $\sigma\geq0$.
Then we have
	\begin{equation}\label{S}
	\big\||\nabla|^{\sigma} e^{it \sqrt{1-\Delta}}f \big\|_{L^2_{x,t}(|(x,t)|^{-\alpha})} \lesssim \|f\|_{H^s(\mathbb{R}^n)}
	\end{equation}
if $\frac{n+2}2\leq\alpha<n+1$, $s> \frac{n\alpha}{4(n+1)}$ and $s-\frac n4<\sigma <s- \frac{n\alpha}{4(n+1)}$.
\end{thm}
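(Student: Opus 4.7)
My plan is to decompose the flow in frequency via a Littlewood--Paley partition and reduce the weighted inequality to a family of frequency-localized estimates which can then be summed using the fact that the space--time weight is Muckenhoupt. To this end, first observe that the power weight $|(x,t)|^{-\alpha}$ on $\mathbb{R}^{n+1}$ lies in the Muckenhoupt class $A_2(\mathbb{R}^{n+1})$ precisely when $-(n+1)<-\alpha<n+1$, which is guaranteed by the hypothesis $\alpha<n+1$. Consequently the weighted Littlewood--Paley square function inequality holds, and it suffices to establish the dyadic bound
\begin{equation*}
\bigl\||\nabla|^{\sigma} e^{it\sqrt{1-\Delta}} P_k f\bigr\|_{L^2_{x,t}(|(x,t)|^{-\alpha})}
\lesssim 2^{-\varepsilon|k|}\,2^{sk}\|P_k f\|_{L^2_x}
\end{equation*}
with some $\varepsilon>0$, for each $k\in\mathbb Z$, where $P_k$ is the projection onto $|\xi|\sim 2^k$; square-summing over $k$ then yields \eqref{S}.

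Next I would recast the dyadic estimate in bilinear/dual form. Squaring the left-hand side and using the kernel of $e^{it\sqrt{1-\Delta}} (e^{it\sqrt{1-\Delta}})^*$ after inserting $|\nabla|^{2\sigma}P_k^2$, the bound is equivalent to a $TT^*$ inequality
\begin{equation*}
\Bigl|\int\!\!\int K_k(x-y,t-s)\,F(x,t)\overline{G(y,s)}\,|(x,t)|^{-\alpha/2}|(y,s)|^{-\alpha/2}\,dxdt\,dyds\Bigr|\lesssim 2^{(2s-2\varepsilon|k|)k}\|F\|_2\|G\|_2,
\end{equation*}
where $K_k(x,t)=\int |\xi|^{2\sigma}\phi(2^{-k}\xi)^2 e^{i(x\cdot\xi+t\sqrt{1+|\xi|^2})}\,d\xi$. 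I will then estimate $K_k$ by stationary/non-stationary phase, carefully distinguishing the two regimes where the symbol $\sqrt{1+|\xi|^2}$ scales differently: for $k\le 0$ the phase is essentially Schr\"odinger-like, with a leading $t$ and a quadratic correction $t|\xi|^2/2$, so the dispersive decay mimics the Schr\"odinger kernel $|t|^{-n/2}$ after rescaling by $2^k$; for $k\ge 0$ the phase behaves like a cone $x\cdot\xi+t|\xi|$, yielding a wave-type decay $(1+2^k(|t|\pm|x|))^{-(n-1)/2}|t|^{-1/2}$ supported near the light cone. These bounds give two endpoint bilinear estimates: one coming from the trivial $L^\infty$ pointwise estimate on $K_k$ of size $2^{k(n+2\sigma)}$, and another coming from the full dispersive decay after integrating $|(x,t)|^{-\alpha}$ against this kernel.

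The proof is then completed by bilinear interpolation between these two endpoints. Integrating the dispersive bound against the weight $|(x,t)|^{-\alpha}$ produces an integrable singularity exactly when $\alpha\ge (n+2)/2$ (in the high-frequency case this is the Strichartz-type threshold for the cone and is why this assumption appears), and the exponent of $2^k$ produced by interpolation lands on $2^{2sk}$ precisely at the balance $s=\frac{n\alpha}{4(n+1)}$ that the theorem allows us to exceed. The range $s-n/4<\sigma<s-\frac{n\alpha}{4(n+1)}$ is what the interpolation parameter sweeps out. The main obstacle, as I see it, is the oscillatory-integral analysis itself: controlling the kernel $K_k$ uniformly across the transition $k\approx 0$ between the Schr\"odinger and wave regimes, and then choosing the interpolation so that the resulting $k$-dependent constant carries a strictly geometric factor $2^{-\varepsilon|k|}$ to enable summation. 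All other steps — the $A_2$-weight verification, the Littlewood--Paley square-function reduction, and the bilinear duality — are essentially automatic given that the oscillatory analysis produces the correct numerology.
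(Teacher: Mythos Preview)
Your overall architecture matches the paper's---Littlewood--Paley reduction via the $A_2$ property of the weight, a $TT^*$/bilinear reformulation, stationary-phase bounds on the kernel, and bilinear real interpolation---but two of the steps you sketch would not work as written. First (minor): the projections $P_k$ act only in $x$, so the weighted square-function inequality must be applied in $\mathbb{R}^n$ at each fixed $t$; for that you need $|(\cdot,t)|^{-\alpha}\in A_2(\mathbb{R}^n)$ uniformly in a.e.\ $t$, not merely membership in $A_2(\mathbb{R}^{n+1})$. The paper deduces the former from the latter via Lebesgue differentiation. More seriously, the bilinear interpolation in the paper is \emph{not} between a trivial $L^\infty$ bound on $K_k$ and a dispersive bound integrated against the weight. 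Instead one introduces a second dyadic parameter $j$: for low frequencies the kernel is cut into pieces $\psi_jK$ supported where $|(x,t)|\sim 2^j$, and for high frequencies the bilinear form is decomposed according to $|t-s|\sim 2^j$ together with a tiling of $\mathbb{R}^n$ into cubes of side $2^j$. At each $j$ one proves a weighted and an unweighted $L^2\times L^2$ bound with explicit powers of $2^j$, and bilinear real interpolation between these in the sequence spaces $\ell^s_\infty$ (indexed by $j$) produces a series that is summable in $j$ exactly when $\alpha\ge(n+2)/2$. Your scheme lacks this second scale and therefore has no mechanism for that threshold to emerge.

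Your high-frequency kernel bound is also incorrect. The Klein--Gordon surface $\tau=\sqrt{1+|\xi|^2}$ is a strictly convex hyperboloid, so after rescaling the Hessian of $\sqrt{2^{-2k}+|\xi|^2}$ has rank $n$, not $n-1$; this is precisely what distinguishes Klein--Gordon from the wave equation. The paper uses this rank-$n$ curvature to obtain $|I_k(x-y,t-s)|\lesssim 2^{2k\sigma+kn/2}\,2^{-nj/2}$ on the $j$th time shell, which feeds directly into the factor $2^{k(\sigma+n/(4p))}$ in the frequency-localized estimate and hence into the identification $s=\sigma+\tfrac{n}{4p}$ that gives the stated range for $(\alpha,\sigma,s)$. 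A wave-type bound with exponent $(n-1)/2$ would change the power of $2^k$ and would not recover that range; the expression $(1+2^k(|t|\pm|x|))^{-(n-1)/2}|t|^{-1/2}$ you wrote does not correspond to a correct estimate for this kernel. So the ``main obstacle'' is not handling the transition at $k\approx 0$ but rather recognizing and exploiting the full curvature of the hyperboloid at \emph{all} frequencies.
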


\begin{rem}\label{rem}
Thus we have a smoothing effect in the setting \eqref{S} even if $n=1$.
Compared to \eqref{mor}, the estimate \eqref{S} when $s=1/2$ holds whenever $\frac{n+2}2\leq\alpha<\min\{n+1,\frac{2(n+1)}{n}\}$
and $\frac12-\frac n4<\sigma <\frac12- \frac{n\alpha}{4(n+1)}$ for $n\leq3$.
Here, $(\alpha,\sigma)$ lies in some region inside the triangle with vertices $C,B,D$, as expected.
\end{rem}

\begin{figure}
	\centering	\includegraphics[width=0.7\textwidth]{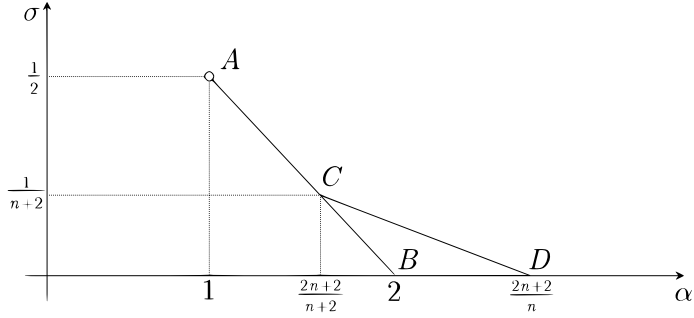}
	\label{fig1}
   \caption{The range of $(\alpha,\sigma)$ for \eqref{mor} and \eqref{S} with $s=1/2$.}
\end{figure}


In general, there are two approaches to smoothing estimates with time-independent weights.
One is spectral methods based on resolvent estimates, starting from Kato's work \cite{K2},
and the other is to use Fourier restriction estimates from harmonic analysis (see e.g. \cite{BBCRV,S} and references therein).
A completely different approach was also recently
developed by Ruzhansky and Sugimoto \cite{RS} using canonical transforms.

However, these are no longer available in the case of time-dependent weights.
Here we instead combine the theory of dispersive estimates ($TT^*$ argument and bilinear approach)
and the Littlewood-Paley theory with Muckenhoupt $A_p$ weights.
This allows us to take advantage of localization on the frequency space.
This more fruitful and direct approach has been developed by Koh and the third author \cite{KS,KS2,KS3,KS4} for the Schr\"odinger and wave flows.
But here we cannot take advantage of scaling consideration any more.
In general, the Klein-Gordon flow $e^{it\sqrt{1-\Delta}}$ is more complicated by the different scaling of $\sqrt{1-\Delta}$
for low and high frequencies.
Indeed, it behaves like the wave flow at high frequency and the Schr\"odinger flow at low frequency.
Hence we need to modify delicately the approach to make it applicable to the Klein-Gordon flow
according to low and high frequencies.

The solution $u$ of the Klein-Gordon equation $\partial_{t}^2 u-\Delta u+u = 0$ with initial data $u(x,0)=f(x)$ and
$\partial_{t} u(x,0)=g(x)$ is given by
\begin{equation}\label{sol}
 u(x,t) = \cos(t\sqrt{1-\Delta}) f + \frac{\sin(t\sqrt{1-\Delta})}{\sqrt{1-\Delta}}g.
\end{equation}
Thus the above theorem for the flow $e^{it\sqrt{1-\Delta}}$ implies immediately the following corollary for \eqref{sol}.

\begin{cor}
Under the same conditions as in Theorem \ref{thm}, we have
	\begin{equation*}
	\big\||\nabla|^{\sigma} u \big\|_{L^2_{x,t}(|(x,t)|^{-\alpha})} \lesssim \|f\|_{H^s} + \| g \|_{H^{s-1}}
	\end{equation*}
with the Cauchy data $(f,g) \in H^{s} \times H^{s-1}$.
\end{cor}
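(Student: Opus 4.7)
The proposal is to deduce the corollary directly from Theorem~\ref{thm} by inserting the representation formula \eqref{sol}. First, I would decompose the two trigonometric operators by Euler's formula:
\begin{equation*}
\cos(t\sqrt{1-\Delta})f = \tfrac12\bigl(e^{it\sqrt{1-\Delta}}+e^{-it\sqrt{1-\Delta}}\bigr)f,
\end{equation*}
\begin{equation*}
\frac{\sin(t\sqrt{1-\Delta})}{\sqrt{1-\Delta}}\,g = \frac{1}{2i}\bigl(e^{it\sqrt{1-\Delta}}-e^{-it\sqrt{1-\Delta}}\bigr)(1-\Delta)^{-1/2}g.
\end{equation*}
Applying $|\nabla|^\sigma$ and the triangle inequality, the corollary reduces to controlling, for $h\in\{f,\,(1-\Delta)^{-1/2}g\}$, the quantity $\bigl\||\nabla|^\sigma e^{\pm it\sqrt{1-\Delta}}h\bigr\|_{L^2_{x,t}(|(x,t)|^{-\alpha})}$.

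Next, I would observe that the weight $|(x,t)|^{-\alpha}=(|x|^2+t^2)^{-\alpha/2}$ is even in $t$, so the change of variables $t\mapsto -t$ inside the $L^2_{x,t}$ integral converts the $e^{-it\sqrt{1-\Delta}}$ term into the $e^{+it\sqrt{1-\Delta}}$ term and preserves the norm. Consequently Theorem~\ref{thm}, though stated only for $e^{+it\sqrt{1-\Delta}}$, applies to both signs. Summing the four resulting bounds yields
\begin{equation*}
\bigl\||\nabla|^\sigma u\bigr\|_{L^2_{x,t}(|(x,t)|^{-\alpha})} \lesssim \|f\|_{H^s} + \bigl\|(1-\Delta)^{-1/2}g\bigr\|_{H^s}.
\end{equation*}

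The remaining step is purely algebraic: by the definition of the Bessel-potential Sobolev norm, $(1-\Delta)^{s/2}(1-\Delta)^{-1/2}=(1-\Delta)^{(s-1)/2}$, so
\begin{equation*}
\bigl\|(1-\Delta)^{-1/2}g\bigr\|_{H^s} = \bigl\|(1-\Delta)^{(s-1)/2}g\bigr\|_{L^2} = \|g\|_{H^{s-1}},
\end{equation*}
which completes the estimate. The deduction is entirely routine once Theorem~\ref{thm} is in hand; there is no genuine obstacle. The only point worth explicitly noting is the parity of $|(x,t)|^{-\alpha}$ in $t$, which is what legitimizes handling the backward-in-time propagator $e^{-it\sqrt{1-\Delta}}$ on the same footing as the forward one.
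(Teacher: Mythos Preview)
Your proposal is correct and is precisely the routine deduction the paper has in mind when it says Theorem~\ref{thm} ``implies immediately'' the corollary via \eqref{sol}; the paper gives no further proof. Your explicit remark about the $t$-parity of the weight and the identity $\|(1-\Delta)^{-1/2}g\|_{H^s}=\|g\|_{H^{s-1}}$ simply spell out the two trivial observations the authors left implicit.
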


When thinking of possible applications to Klein-Gordon equations with potential or nonlinearity,
some inhomogeneous estimates such as
	\begin{equation*}
	\bigg\|\int_0^t\frac{e^{i(t-s) \sqrt{1-\Delta}}}{\sqrt{1-\Delta}}F(\cdot,s) \bigg\|_{L^2_{x,t}(|(x,t)|^{-\alpha})} \lesssim \|F\|_{L^2_{x,t}(|(x,t)|^{\alpha})}
	\end{equation*}
are useful in some cases. A similar argument as used for \eqref{S} is available for obtaining such inhomogeneous estimates.
But this is not a purpose of the present paper. We instead refer the reader to \cite{KS} for details.

\

\noindent\textit{Outline of paper.}
In Section \ref{sec2} we prove the estimate \eqref{S} in Theorem \ref{thm}
applying the Littlewood-Paley theory with Muckenhoupt $A_2$ weights to its frequency localized estimates in Proposition \ref{FLE}.
These localized estimates are obtained in Section \ref{sec3} by utilizing the $TT^*$ argument and then making use of the bilinear interpolation
between their bilinear form estimates (see, for example, Proposition \ref{local}).
We carry out this process by dividing cases into two parts according to the different scaling of $\sqrt{1-\Delta}$
for low (\eqref{LE}) and high (\eqref{LE2}) frequencies.
The proof of \eqref{LE2} is much more delicate.
We generate further localizations in time and space and need to analyze some relevant oscillatory integrals more carefully
(see Lemma \ref{osci})
based on the stationary phase lemma, Lemma \ref{stationary_phase}.

Throughout this paper, the letter $C$ stands for a positive constant which may be different at each occurrence.
We also denote $A\lesssim B$ to mean $A\leq CB$
with unspecified constants $C>0$.

\section{Proof of Theorem 1.1}\label{sec2}

In this section we prove Theorem \ref{thm} by making use of the Littlewood-Paley theory on weighted $L^2$ spaces.

Let us first recall that a weight\footnote{It is a locally integrable function that is allowed to be zero or
infinite only on a set of Lebesgue measure zero.} $w : \mathbb{R}^{n} \rightarrow [0, \infty]$ is said to be
in the Muckenhoupt $A_2(\mathbb{R}^{n})$ class if there is a constant $C_{A_2}$ such that
	$$ \sup_{Q \, \textnormal{cubes in } \mathbb{R}^{n}} \left( \frac{1}{|Q|}\int_{Q} w(x) dx \right)\left( \frac{1}{|Q|}\int_{Q}w(x)^{-1}dx \right)<C_{A_2}.$$
(See e.g. \cite{G} for details.)
We then observe that the weight $|(x,t)|^{-\alpha}$ for $-(n+1) < \alpha < n+1$ is in $A_2(\mathbb{R}^n)$ with $C_{A_2}$
which is uniform in almost every $t \in \mathbb{R}$.
Indeed, $|(x,t)|^{-\alpha}$ is in $A_2(\mathbb{R}^{n+1})$ for $-(n+1) < \alpha < n+1$.
That is to say, there is a constant $C_{A_2}$ such that
\begin{equation}\label{spw}
\sup_{\widetilde{Q} \, \textnormal{cubes in } \mathbb{R}^{n+1}} \left( \frac{1}{|\widetilde{Q}|}\int_{\widetilde{Q}} |(x,t)|^{-\alpha} dxdt \right)\left( \frac{1}{|\widetilde{Q}|}\int_{\widetilde{Q}}|(x,t)|^{\alpha}dxdt \right)<C_{A_2}.
\end{equation}
For any cube $Q \subset \mathbb{R}^n$ and any interval $I \subset \mathbb{R}$,
by applying \eqref{spw} to $\widetilde{Q} = Q \times I$,
we conclude that
$$
\sup_{Q \in \mathbb{R}^{n},\, I \in \mathbb{R}} \left( \frac{1}{| I|} \int_{I}  \frac{1}{|Q |} \int_{Q} |(x,t)|^{-\alpha} dxdt \right) \left( \frac{1}{| I|} \int_{I}  \frac{1}{|Q|} \int_{Q} |(x,t)|^{\alpha}dx dt\right)<C_{A_2}.
$$
Since $|(x,t)|^{\pm\alpha}$ are $L_{\textrm{loc}}^1 (\mathbb{R}^n)$ functions for $-(n+1)<\alpha < (n+1)$,
by Lebesgue's differentiation theorem,
we can shrink $|I|$ to zero so that $|(x,t)|^{-\alpha}\in A_2(\mathbb{R}^n)$ uniformly in almost every $t$.

Now we are in a good light that the Littlewood-Paley theory (see Theorem 1 in \cite{Ku}) on weighted $L^2$ space with $A_2$ weight
is applied to get
\begin{align}\label{LP}
\nonumber\big\||\nabla|^{\sigma} e^{it\sqrt{1-\Delta}}\big(\sum_{j>0}P_jf&\big)\big\|_{L^2_{x,t}(|(x,t)|^{-\alpha})}^2
=\int_{\mathbb{R}} \big\||\nabla|^{\sigma} e^{it\sqrt{1-\Delta}}\big(\sum_{j>0}P_jf\big)\big\|_{L^2_{x}(|(x,t)|^{-\alpha})}^2 dt \nonumber\\
&\lesssim \int_{\mathbb{R}} \bigg\| \Big(\sum_{k\in\mathbb{Z}} \Big|P_k|\nabla|^{\sigma} e^{it\sqrt{1-\Delta}}\big(\sum_{j>0}P_jf\big)\Big|^2\Big)^{1/2}\bigg\|_{L^2_{x}(|(x,t)|^{-\alpha})}^2 dt \nonumber\\
&\lesssim \int_{\mathbb{R}} \bigg\| \Big(\sum_{k\geq0} \Big|P_k|\nabla|^{\sigma} e^{it\sqrt{1-\Delta}}\big(\sum_{|j-k|\leq1}P_jf\big)\Big|^2\Big)^{1/2}\bigg\|_{L^2_{x}(|(x,t)|^{-\alpha})}^2 dt \nonumber\\
&= C \sum_{k\geq0} \big\| |\nabla|^{\sigma} e^{it\sqrt{1-\Delta}}P_k\big(\sum_{|j-k|\leq1}P_jf\big)\big\|_{L^2_{x,t}(|(x,t)|^{-\alpha})}^2.
\end{align}
Here, we used the fact that $P_k P_j f = 0$ if $|j-k|\ge 2$,
where $P_k$, $ k \in \mathbb{Z}$, are Littlewood-Paley projections given by
$\widehat{P_k f}(\xi) = \phi(2^{-k}|\xi|)\widehat{f}(\xi)$
with a smooth cut-off function $\phi : \mathbb{R} \rightarrow [0,1]$ which is supported in $(1/2,2)$ and satisfies
$ \sum_{k\in\mathbb{Z}} \phi(2^k t) = 1$, $t>0$.
We also denote
$P_{\le 0}f = f-\sum_{k>0} P_k f$.
Using \eqref{LP}, we now see that
\begin{align}\label{over}
\big\||\nabla|^{\sigma} e^{it\sqrt{1-\Delta}}f\big\|_{L^2_{x,t}(|(x,t)|^{-\alpha})}
&\lesssim\big\| |\nabla|^{\sigma} e^{it\sqrt{1-\Delta}}P_{\le 0} f \big\|_{L^2_{x,t}(|(x,t)|^{-\alpha})}\\
\nonumber&+ \Big(\sum_{k\geq0} \big\| |\nabla|^{\sigma} e^{it\sqrt{1-\Delta}}P_k\big(\sum_{|j-k|\leq1}P_jf\big)\big\|_{L^2_{x,t}(|(x,t)|^{-\alpha})}^2\Big)^{1/2}.
\end{align}

To bound the right-hand side of \eqref{over}, we use the following frequency localized estimates
 for low (\eqref{LE}) and high (\eqref{LE2}) frequencies
which will be obtained in the next section.

\begin{prop}\label{FLE}
Let $n\ge1$ and $\sigma\geq0$. Assume that $1<p<\frac{2(n+1)}{n+2}$ and $ \frac{n+2}{2} \le \alpha < \frac{n+1}{p}$. Then we have
\begin{equation}\label{LE}
\big\||\nabla|^{\sigma} e^{it\sqrt{1-\Delta}}P_{\le 0} f\big\|_{L_{x,t}^2(|(x,t)|^{-\alpha})}\lesssim \|f\|_{L^2}
\end{equation}
and
\begin{equation}\label{LE2}
\big\||\nabla|^{\sigma} e^{it\sqrt{1-\Delta}}P_kf\big\|_{L_{x,t}^2(|(x,t)|^{-\alpha})}\lesssim 2^{k(\sigma+ \frac{n}{4p})} \|f\|_{L^2}
\end{equation}
for all $k\geq0$.
\end{prop}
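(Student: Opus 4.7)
The plan is to prove both \eqref{LE} and \eqref{LE2} via the $TT^*$ argument, reducing each to a bilinear form estimate on the kernel of $T_k T_k^*$ (with $T_k f = |\nabla|^{\sigma} e^{it\sqrt{1-\Delta}} P_k f$, or the analogous low-frequency cutoff), and then bounding that bilinear form by combining pointwise dispersive bounds from stationary phase with a Stein--Weiss / bilinear interpolation argument that absorbs the space-time weight.

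By duality and $TT^*$, the target $\|T_k f\|_{L^2(|(x,t)|^{-\alpha})} \lesssim M_k \|f\|_{L^2}$ with $M_k = 1$ or $M_k = 2^{k(\sigma + n/(4p))}$ is equivalent to the bilinear bound
\[
|\langle T_k T_k^* F, G\rangle| \lesssim M_k^2 \|F\|_{L^2(|(x,t)|^\alpha)} \|G\|_{L^2(|(x,t)|^\alpha)},
\]
where $T_k T_k^*$ is a convolution in $(x,t)$ on $\mathbb{R}^{n+1}$ with kernel
\[
K_k(z,\tau) = \int_{\mathbb{R}^n} e^{i(z \cdot \xi + \tau\sqrt{1+|\xi|^2})} |\xi|^{2\sigma} \phi(2^{-k}|\xi|)^2 d\xi.
\]
For \eqref{LE}, the Fourier support $|\xi| \lesssim 1$ keeps the Hessian of the phase nondegenerate, yielding a Schr\"odinger-like decay in $\tau$ and an analyzable $z$-dependence. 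For \eqref{LE2}, I would rescale $\xi = 2^k \eta$ to expose the large parameter $2^k$ with phase $\eta\cdot z + \tau\sqrt{2^{-2k}+|\eta|^2}$; this is a wave-type oscillatory integral with mass correction $2^{-2k}$, and Lemma \ref{stationary_phase} together with Lemma \ref{osci} is used to extract uniform-in-$k$ dispersive bounds of the form $|K_k(z,\tau)| \lesssim 2^{ka}(1+|(z,\tau)|)^{-b}$ on the appropriate region of $(z,\tau)$-space.

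The next step is to convert these kernel bounds into the weighted $L^2$ bilinear estimate. I would do this by bilinear interpolation between two endpoint bilinear form estimates on $B(F,G) = \langle K_k \ast F, G\rangle$: an $L^1 \times L^1$-type bound coming from the trivial $L^\infty$-norm of $K_k$, and an $L^{p_0} \times L^{q_0}$-type bound coming from applying the Hardy--Littlewood--Sobolev inequality to the dispersive bound. Real bilinear interpolation between these pairs produces intermediate estimates throughout the range $1 < p < 2(n+1)/(n+2)$. The Stein--Weiss inequality on $\mathbb{R}^{n+1}$, which applies precisely in the range $0 < \alpha < (n+1)/p$ prescribed by the proposition, then converts the resulting $L^{p'}$-bilinear bound into the weighted bilinear bound with the sharp constant $M_k^2 = 2^{2k(\sigma + n/(4p))}$ after tracking the $2^k$-dependence through the interpolation.

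The hard part will be the high-frequency estimate \eqref{LE2}. Because $\sqrt{1-\Delta}$ interpolates between Schr\"odinger and wave regimes, the Hessian of the rescaled phase has a degenerate direction along the light cone whose size is modified by the mass term $2^{-2k}$, so a direct application of stationary phase fails to give uniform-in-$k$ bounds. I anticipate, following the authors' outline, having to split $(z,\tau)$-space into dyadic regions (e.g., $|\tau| \ll |z|$, $|\tau| \sim |z|$, $|\tau| \gg |z|$) and to introduce further space-time localizations, treating the stationary and non-stationary regimes separately. Ensuring that the $2^k$-dependence obtained in each piece survives the bilinear interpolation and matches the Stein--Weiss constraint to yield precisely the exponent $\sigma + n/(4p)$ is the delicate core of the argument.
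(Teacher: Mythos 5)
Your skeleton ($TT^*$ reduction, duality, the kernel $K_k$, the rescaling $\xi=2^k\eta$, stationary-phase bounds, bilinear interpolation against weighted $L^2$ norms) matches the paper's strategy in outline, but the pair of endpoints you propose to interpolate between cannot produce the constant $2^{k(\sigma+\frac{n}{4p})}$, and this is a genuine gap rather than a technicality. Both of your endpoint bilinear estimates are kernel-size bounds: the trivial one costs $\|K_k\|_\infty\lesssim 2^{2k\sigma}2^{kn}$, and the HLS/Stein--Weiss one uses the dispersive majorant $|K_k(w)|\lesssim 2^{2k\sigma}2^{kn}(1+2^k|w|)^{-n/2}\le 2^{2k\sigma}2^{kn/2}|w|^{-n/2}$ and hence costs at best $2^{2k\sigma}2^{kn/2}$. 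Interpolation can only give a bilinear constant $2^{2k\sigma}2^{k\nu}$ with $\nu\in[\frac n2,n]$, i.e.\ at the level of the flow at best $2^{k(\sigma+\frac n4)}$; no interpolation of size-based bounds can reach $2^{k(\sigma+\frac n{4p})}$ for any $p>1$. But the exponent $\frac{n}{4p}$ with $p$ up to $\frac{2(n+1)}{n+2}$ is exactly what Proposition \ref{FLE} must deliver to give $s>\frac{n\alpha}{4(n+1)}$ in Theorem \ref{thm}. Moreover, Stein--Weiss at $L^2\times L^2$ with the homogeneous majorant $|w|^{-n/2}$ forces the scaling relation $\alpha=\frac{n+2}2$ exactly, so your scheme as written also does not explain the full range $\frac{n+2}2\le\alpha<\frac{n+1}p$.

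The missing ingredient is an endpoint that exploits orthogonality rather than kernel size. The paper decomposes the time integral dyadically ($|t-s|\sim2^j$) and, for each piece $B_j$ in Proposition \ref{local}, proves two bounds: the weighted estimate \eqref{wL2}, which is of the kernel-size type you describe (Lemma \ref{osci} plus decomposition into spatial cubes near and far from the diagonal) and carries the loss $2^{2k\sigma+\frac{nk}2}2^{j(\frac n2+1-\alpha p)}$; and the unweighted estimate \eqref{L2}, proved by Cauchy--Schwarz and Plancherel in $x$ on time intervals of length $2^j$, which costs only $2^{2k\sigma+j}$ with no factor $2^{kn/2}$ at all. Bilinear real interpolation with parameter $1/p$ (Lemma \ref{biint}, then Lemma \ref{rint}) trades the frequency loss against this Plancherel endpoint, giving precisely $2^{2k\sigma+\frac{kn}{2p}}$, while the resulting $j$-exponent $1+\frac n{2p}-\alpha$ is negative throughout the claimed range, so the dyadic sum converges; the dyadic decomposition also provides the slack that handles all $\alpha\ge\frac{n+2}2$, not only the scaling-critical value. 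Without a Plancherel-type endpoint your interpolation has nothing to trade the $2^{kn/2}$ against, so the proposal fails for \eqref{LE2}. The low-frequency bound \eqref{LE} is insensitive to this issue (there is no $k$), but there too you would need the dyadic space-time decomposition and the three weighted/unweighted bilinear bounds \eqref{bi1}--\eqref{bi3} (or some substitute) to go beyond $\alpha=\frac{n+2}2$.
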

From Proposition \ref{FLE} we indeed get
\begin{align}\label{eq12}
\nonumber\big\| |\nabla|^{\sigma} e^{it\sqrt{1-\Delta}}P_{\le 0} f \big\|_{L^2_{x,t}(|(x,t)|^{-\alpha})}
& \lesssim\big\| |\nabla|^{\sigma} e^{it\sqrt{1-\Delta}}P_{\le 0}\big(\sum_{j\leq1} P_jf\big)  \big\|_{L^2_{x,t}(|(x,t)|^{-\alpha})}\\
\nonumber&\lesssim  \|\sum_{j\leq1} P_jf\|_{L^2}\\
&\lesssim  \|P_{\leq0}f\|_{L^2}+\|P_1f\|_{L^2}
\end{align}
while
\begin{align}\label{eq22}
\sum_{k\geq0}\big\| |\nabla|^{\sigma} e^{it\sqrt{1-\Delta}}P_k\big(\sum_{|j-k|\leq1}P_jf\big)\big\|_{L^2_{x,t}(|(x,t)|^{-\alpha})}^2
&\lesssim \sum_{k\geq0}2^{2k(\sigma+ \frac{n}{4p})} \Big\|\sum_{|j-k|\le 1} P_j f\Big\|_{L^2}^2 \nonumber\\
&\lesssim   \sum_{k\geq1}2^{2k(\sigma+ \frac{n}{4p})} \|P_k f\|_{L^2}^2.
\end{align}
By combining \eqref{over}, \eqref{eq12} and \eqref{eq22}, we conclude that
\begin{align*}
\big\||\nabla|^{\sigma} e^{it\sqrt{1-\Delta}}f\big\|_{L^2_{x,t}(|(x,t)|^{-\alpha})}
&\lesssim\|P_{\le 0}f\|_{L^2}+\Big(\sum_{k\geq1}2^{2k(\sigma+ \frac{n}{4p})} \|P_k f\|_{L^2}^2\Big)^{1/2}\\
&\lesssim\|f\|_{B^{\sigma+ \frac{n}{4p}}_{2,2}}
\end{align*}
if $1<p<\frac{2(n+1)}{n+2}$ and $ \frac{n+2}{2} \le \alpha < \frac{n+1}{p}$.
Here we recall the Besov space $B^s_{r,m}$ for $s \in \mathbb{R}$ and $1 \le r,m\le \infty$ equipped with the norm
\begin{equation*}
\|f\|_{B^s_{r,m}} = \|P_{\leq0} f\|_{L^r} + \Big(\sum_{k>0} \big(2^{ks}\|P_kf\|_{L^r}\big)^m \Big)^{1/m}.
\end{equation*}
Since $B^s_{2,2}$ coincides with the inhomogeneous Sobolev space $H^s$,
by letting $s=\sigma+\frac{n}{4p}$ and eliminating the redundant exponent $p$
in the conditions on $p$, $\alpha$ and $s$,
we get \eqref{S} under the same conditions as in Theorem \ref{thm}.

\section{Frequency localized estimates: Proof of Proposition \ref{FLE}}\label{sec3}
In this section we obtain the frequency localized estimates in Proposition \ref{FLE}.
To do so, we first utilize the standard $TT^*$ argument and then make use of the bilinear interpolation
between their bilinear form estimates.
We carry out this process by dividing cases into two parts
according to the different scaling of $\sqrt{1-\Delta}$ for low (\eqref{LE}) and high (\eqref{LE2}) frequencies.

\subsection{Proof of \eqref{LE}}
First we consider an operator
$$ f \mapsto Tf :=|\nabla|^{\sigma} e^{it\sqrt{1-\Delta}}P_{\le 0} f$$
and its adjoint operator
$$ F \mapsto T^* F := \int_{-\infty}^{\infty}|\nabla|^{\sigma} e^{-is\sqrt{1-\Delta}} P_{\le 0} F(\cdot, s) ds. $$
From the standard $TT^*$ argument, \eqref{LE} is then equivalent to
\begin{equation*}\label{TT*s}
\bigg\|\int_{-\infty}^{\infty} |\nabla|^{2\sigma} e^{i(t-s)\sqrt{1-\Delta}}\,P_{\le 0}^2 F(\cdot,s)\, ds\bigg\|_{L^2_{x,t}(|(x,t)|^{-\alpha})} \lesssim  \|F\|_{L^2_{x,t}(|(x,t)|^{\alpha})}.
\end{equation*}
Again by duality, it suffices to show the following bilinear form estimate
$$
\bigg|\bigg\langle\int_{-\infty}^{\infty} |\nabla|^{2\sigma} e^{i(t-s)\sqrt{1-\Delta}}P_{\le 0}^2 F(\cdot,s) ds, G(x,t)\bigg\rangle_{L_{x,t}^2}\bigg|
\lesssim  \|F\|_{L^2_{x,t}(|(x,t)|^{\alpha})}\|G\|_{L^2_{x,t}(|(x,t)|^{\alpha})}.
$$
To show this, we first write
\begin{equation*}
\int_{-\infty}^\infty |\nabla|^{2\sigma} e^{i(t-s)\sqrt{1-\Delta}} P_{\leq0}^2 F(\cdot, s) ds = K * F,
\end{equation*}
where
\begin{equation*}
K(x,t)= \int_{\mathbb{R}^n} e^{i(x\cdot \xi +t\sqrt{1+|\xi|^2})} |\xi|^{2\sigma} \varphi(\xi)^2 d\xi
\end{equation*}
with a smooth cut-off function $\varphi : \mathbb{R}^n \rightarrow [0,1]$ supported in $\{\xi\in\mathbb{R}^n:|\xi|\leq2\}$.
Next we decompose the kernel $K$ as
\begin{equation*}
K=\sum_{j \geq 0} \psi_j K,
\end{equation*}
where $\psi_j : \mathbb{R}^{n+1} \rightarrow [0,1]$ is a smooth function which is supported in $B(0,1)$ for $j=0$
and in $B(0,2^j)\setminus B(0,2^{j-2})$ for $j \geq 1 $, such that $\sum_{j \geq 0} \psi_j = 1$. Then it is enough to show that
\begin{equation}\label{wts1}
\sum_{j \geq 0} |\langle (\psi_j K)* F , G \rangle| \leq C \| F \|_{L^2 (|(x,t)|^{\alpha})} \| G \|_{L^2 (|(x,t)|^{\alpha})}.
\end{equation}
For this, we assume for the moment that for $p>0$ and $0<\alpha< (n+1)/p$
\begin{align}
\label{bi1}
|\langle (\psi_j K)* F , G \rangle| &\leq C2^{j(\frac{n+2}{2} - \alpha p)} \| F \|_{L^2 (|(x,t)|^{\alpha p})} \| G \|_{L^2 (|(x,t)|^{\alpha p})},\\
\label{bi2}
|\langle (\psi_j K)* F , G \rangle| &\leq C2^{j(\frac{n+2}{2} - \frac{\alpha p}{2})} \| F \|_{L^2 (|(x,t)|^{\alpha p})} \| G \|_{L^2},\\
\label{bi3}
|\langle (\psi_j K)* F , G \rangle| &\leq C2^{j(\frac{n+2}{2} - \frac{\alpha p}{2})} \| F \|_{L^2} \| G \|_{L^2 (|(x,t)|^{\alpha p})},
\end{align}
and use the following bilinear interpolation lemma (see \cite{BL}, Section 3.13, Exercise 5(b)).

\begin{lem}\label{bil}
	For $i=0,1$, let $A_i , B_i , C_i$ be Banach spaces and let $T$ be a bilinear operator such that $T: A_0 \times B_0 \rightarrow C_0$, $T : A_0 \times B_1 \rightarrow C_1$, and $T : A_1 \times B_0 \rightarrow C_1$. Then one has
	\begin{equation*}
	T : (A_0,A_1)_{\theta_0, q} \times (B_0, B_1)_{\theta_1, r} \rightarrow (C_0, C_1)_{\theta,1}
	\end{equation*}
	if $0< \theta_i < \theta=\theta_0+\theta_1 <1$ and $1/q + 1/r \geq 1$ for $1 \leq q,r \leq \infty$.
Here, $(\cdot\,,\cdot)_{\theta,p}$ denotes the real interpolation functor.
\end{lem}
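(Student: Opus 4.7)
The plan is to derive the estimate from the $J$-method characterization of real interpolation spaces, thereby reducing the bilinear bound to a discrete convolution inequality on $\mathbb{Z}$. Recall that $a\in(A_0,A_1)_{\theta_0,q}$ if and only if $a=\sum_{\nu\in\mathbb{Z}}u_\nu$ (converging in $A_0+A_1$) with $u_\nu\in A_0\cap A_1$, and the norm is equivalent to the infimum of $\bigl(\sum_\nu(2^{-\nu\theta_0}J(2^\nu,u_\nu;A_0,A_1))^q\bigr)^{1/q}$, where $J(s,u;A_0,A_1)=\max(\|u\|_{A_0},s\|u\|_{A_1})$; the analogous descriptions hold for $b\in(B_0,B_1)_{\theta_1,r}$ and for the target space $(C_0,C_1)_{\theta,1}$. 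The first step is to fix near-optimal $J$-decompositions $a=\sum_\nu u_\nu$ and $b=\sum_\mu v_\mu$ and write $T(a,b)=\sum_{\nu,\mu}T(u_\nu,v_\mu)$.

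The three hypotheses on $T$ combine into a single $J$-bound for each summand. Setting $A_\nu:=J(2^\nu,u_\nu;A_0,A_1)$ and $B_\mu:=J(2^\mu,v_\mu;B_0,B_1)$, so that $\|u_\nu\|_{A_0}\le A_\nu$, $\|u_\nu\|_{A_1}\le 2^{-\nu}A_\nu$ (and similarly for $v_\mu$), I would obtain
\begin{equation*}
\|T(u_\nu,v_\mu)\|_{C_0}\lesssim A_\nu B_\mu,\qquad \|T(u_\nu,v_\mu)\|_{C_1}\lesssim 2^{-\max(\nu,\mu)}A_\nu B_\mu,
\end{equation*}
the second bound arising as the minimum of the two $C_1$ estimates coming from $T\colon A_0\times B_1\to C_1$ and $T\colon A_1\times B_0\to C_1$. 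Consequently $J(2^{\max(\nu,\mu)},T(u_\nu,v_\mu);C_0,C_1)\lesssim A_\nu B_\mu$. Regrouping by level, set $w_k:=\sum_{\max(\nu,\mu)=k}T(u_\nu,v_\mu)\in C_0\cap C_1$, so that $T(a,b)=\sum_k w_k$ with $J(2^k,w_k;C_0,C_1)\lesssim\sum_{\max(\nu,\mu)=k}A_\nu B_\mu$. Applying the $J$-method representation of $(C_0,C_1)_{\theta,1}$ then yields
\begin{equation*}
\|T(a,b)\|_{(C_0,C_1)_{\theta,1}}\lesssim\sum_k 2^{-k\theta}\sum_{\max(\nu,\mu)=k}A_\nu B_\mu.
\end{equation*}

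The last step is to recognize the right-hand side as a discrete convolution and apply Young's inequality on $\mathbb{Z}$. Splitting according to whether $\max(\nu,\mu)=\nu$ or $\mu$, and changing variables to $\widetilde A_\nu:=2^{-\nu\theta_0}A_\nu$, $\widetilde B_\mu:=2^{-\mu\theta_1}B_\mu$, whose $\ell^q$- and $\ell^r$-norms control $\|a\|$ and $\|b\|$, the $\nu\ge\mu$ portion becomes (using $\theta=\theta_0+\theta_1$)
\begin{equation*}
\sum_\nu\widetilde A_\nu\sum_{\mu\le\nu}2^{-(\nu-\mu)\theta_1}\widetilde B_\mu=\sum_\nu\widetilde A_\nu(\widetilde B*\varphi)(\nu),
\end{equation*}
with the one-sided geometric kernel $\varphi(j)=2^{-j\theta_1}\mathbf{1}_{j\ge 0}$, which lies in $\ell^s$ for every $s\in[1,\infty]$ since $\theta_1>0$. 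Hölder with exponents $(q,q')$ followed by Young's convolution inequality $\ell^r*\ell^s\subset\ell^{q'}$ (valid when $1/s+1/r=1+1/q'$) then gives the bound $\|\widetilde A\|_{\ell^q}\|\widetilde B\|_{\ell^r}\|\varphi\|_{\ell^s}$; the admissibility condition $s\ge 1$ is precisely $1/q+1/r\ge 1$. The $\mu>\nu$ portion is handled symmetrically using $\theta_0>0$. The main obstacles I expect are purely technical: justifying convergence of the double sum in $C_0+C_1$ and the legality of the regrouping (both of which rely on $0<\theta_i<\theta<1$ to make all relevant geometric series converge), avoiding double counting of the diagonal $\nu=\mu=k$, and handling the endpoint $1/q+1/r=1$ where $\varphi$ sits only in $\ell^\infty$. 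The conceptual heart of the argument is the identification of the bilinear interpolation with a discrete convolution, through which the hypothesis $1/q+1/r\ge 1$ emerges naturally as the sharp range for Young's inequality.
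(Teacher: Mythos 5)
The paper does not prove this lemma at all: it is quoted verbatim from Bergh--L\"ofstr\"om, Section 3.13, Exercise 5(b), so there is no in-paper argument to compare against. Your $J$-method proof is the standard intended solution of that exercise, and its core is correct: combining the two off-diagonal hypotheses into $\|T(u_\nu,v_\mu)\|_{C_1}\lesssim 2^{-\max(\nu,\mu)}A_\nu B_\mu$, hence $J(2^{\max(\nu,\mu)},T(u_\nu,v_\mu);C_0,C_1)\lesssim A_\nu B_\mu$, and then estimating the $(C_0,C_1)_{\theta,1;J}$ norm by $\sum_{\nu,\mu}2^{-\theta\max(\nu,\mu)}A_\nu B_\mu$, which after the substitution $\widetilde A_\nu=2^{-\nu\theta_0}A_\nu$, $\widetilde B_\mu=2^{-\mu\theta_1}B_\mu$ is exactly a one-sided geometric convolution; H\"older plus Young on $\mathbb{Z}$ then produces the condition $1/q+1/r\ge 1$ sharply, with $\theta_0,\theta_1>0$ ensuring the kernels are summable. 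Two small caveats. First, your endpoint remark is backwards: at $1/q+1/r=1$ the Young exponent is $s=1$, so you need $\varphi\in\ell^1$, which holds precisely because $\theta_1>0$ (it is at $q=r=1$ that $s=\infty$ suffices). Second, the convergence/regrouping issue you flag is the one genuinely delicate point, since no bound on $A_1\times B_1$ is assumed: to identify $T(a,b)$ with $\sum_{\nu,\mu}T(u_\nu,v_\mu)$ one should use that $\sum_{\nu\le 0}u_\nu$ converges absolutely in $A_0$ and $\sum_{\nu>0}u_\nu$ in $A_1$ (similarly for $b$), together with the standing assumption that $T$ is defined on $(A_0+A_1)\times(B_0+B_1)$, or first argue for $a\in A_0\cap A_1$, $b\in B_0\cap B_1$ and conclude by density when $q,r<\infty$; since you explicitly identified this as the remaining technical work, I regard the proposal as a correct proof sketch that supplies the argument the paper leaves to the cited reference.
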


Indeed, let us first define the bilinear vector-valued operator $T$ by
\begin{equation*}
T(F,G) = \left\{ \langle(\psi_j K)*F, G \rangle \right\}_{j \geq 0}.
\end{equation*}
Then \eqref{wts1} is equivalent to
\begin{equation}\label{after_int}
T : L^2 (|(x,t)|^{\alpha}) \times L^2 (|(x,t)|^{\alpha}) \rightarrow \ell_1^0 (\mathbb{C}).
\end{equation}
Here, for $a \in \mathbb{R}$ and $ 1 \leq p \leq \infty$, $\ell_p^a (\mathbb{C})$ denotes the weighted sequence space with the norm
\begin{eqnarray}\label{seqsp}
\|\{x_j\}_{j \ge 0}\|_{\ell_{p}^{a}}=
\begin{cases}
\ (\sum_{j \ge0} 2^{jap} |x_{j}|^{p})^{1/p} \quad \text{if}\quad p \neq \infty,\nonumber\\
\ \sup_{j \ge0} 2^{ja}|x_{j}| \quad \text{if}\quad p=\infty.\nonumber
\end{cases}
\end{eqnarray}
Note also that the above three estimates \eqref{bi1}, \eqref{bi2} and \eqref{bi3} are equivalent to
\begin{align}
 \left\|T (F,G)\right\|_{\ell_{\infty}^{\beta_{0}}(\mathbb{C})} &\leq C \|F\|_{L^{2}(|(x,t)|^{\alpha p})} \|G\|_{L^{2}(|(x,t)|^{\alpha p})},  \nonumber \\
\label{op_bi2}	 \left\|T (F,G)\right\|_{\ell_{\infty}^{\beta_{1}}(\mathbb{C})} &\leq C \|F\|_{L^{2}(|(x,t)|^{\alpha p})} \|G\|_{2}, \\
\label{op_bi3}	 \left\|T (F,G)\right\|_{\ell_{\infty}^{\beta_{1}}(\mathbb{C})} &\leq C \|F\|_{2} \|G\|_{L^{2}(|(x,t)|^{\alpha p})},
\end{align}
respectively, with $\beta_0 = -(\frac{n+2}{2} - \alpha p)$ and $\beta_1 = -(\frac{n+2}{2} - \frac{\alpha p}{2})$. Then, applying Lemma \ref{bil} with $\theta_0 = \theta_1 = 1/p'$ and $q=r=2$, we get for $1 < p < 2$,
\begin{equation*}
T : (L^2 (|(x,t)|^{\alpha p}), L^2)_{1/p', 2} \times (L^2 (|(x,t)|^{\alpha p}), L^2)_{1/p', 2} \rightarrow (\ell_{\infty}^{\beta_0}(\mathbb{C}), \ell_{\infty}^{\beta_1}(\mathbb{C}))_{2/p' , 1}.
\end{equation*}
Now, we use the following real interpolation space identities (see Theorems 5.4.1 and 5.6.1 in \cite{BL}):

\begin{lem}\label{rint}
	Let $0 < \theta <1$. Then one has
	\begin{equation*}
	(L^{2} (w_0), L^{2} (w_1))_{\theta,\,2} = L^2 (w), \quad w=w_{0} ^{1-\theta}w_{1} ^{\theta},
	\end{equation*}
	and for $1 \leq q_0, q_1, q \leq \infty$ and $ s_0 \ne s_1$,
	\begin{equation*}
	(\ell_{q_0}^{s_0}, \ell_{q_1}^{s_1})_{\theta, q} = \ell_{q}^{s}, \quad s=(1-\theta)s_0 + \theta s_1.
	\end{equation*}
\end{lem}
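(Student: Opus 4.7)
The plan is to deduce both identities from explicit computations of the Peetre $K$-functional.

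For the first identity, the fact that both endpoint spaces share the same underlying measure allows the $K$-functional to decouple pointwise. Writing $f = f_0 + f_1$ and minimizing $\|f_0\|_{L^2(w_0)}^2 + t^2 \|f_1\|_{L^2(w_1)}^2$ over such decompositions by a pointwise Lagrange argument yields the closed form
\[
K(t,f)^2 = \int_{\mathbb{R}^n} \frac{t^2\, w_0(x) w_1(x)}{w_0(x) + t^2 w_1(x)}\, |f(x)|^2 \, dx.
\]
Inserting this into $\|f\|_{\theta,2}^2 = \int_0^\infty t^{-2\theta}\, K(t,f)^2\, dt/t$ and applying Fubini, the remaining scalar integral in $t$ is handled by the substitution $t = (w_0/w_1)^{1/2} u$: it factors as $w_0(x)^{1-\theta} w_1(x)^\theta$ times the Beta integral $\int_0^\infty u^{-\theta}/(1+u)\, du = \pi/\sin(\pi\theta)$, which depends only on $\theta$. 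This produces the weighted $L^2$ identity with an explicit constant.

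For the second identity, I would use that each $\ell_{q_i}^{s_i}$ is a retract of a weighted vector-valued $\ell^{q_i}$ space, which reduces the problem via the retract theorem to computing the real interpolation of weighted scalar sequence spaces. The $K$-functional of $x = \{x_j\}$ then admits a comparable expression of the form
\[
K(t,x) \sim \Big(\sum_j \bigl(\min(2^{j s_0},\, t\, 2^{j s_1})\, |x_j|\bigr)^{q_0}\Big)^{1/q_0}
\]
(with the natural modification when $q_0 \ne q_1$), and the hypothesis $s_0 \ne s_1$ localizes the transition between the two minimizing regimes to a unique index $j^*(t) \sim (\log_2 t)/(s_0 - s_1)$. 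Substituting into the interpolation norm, exchanging sum and integral, and evaluating the resulting scalar integrals on $(0,\infty)$ yields a geometric series in $j$ whose sum is $\|x\|_{\ell_q^s}^q$ with $s = (1-\theta)s_0 + \theta s_1$.

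The main obstacle is the sequence-space case when $q_0 \ne q_1$: the $K$-functional has no clean closed form, and one must rely on a two-sided estimate obtained by testing against the natural extremal decomposition at each dyadic index. The hypothesis $s_0 \ne s_1$ is essential here, since it forces the transition threshold to be a genuine power of $t$ and thereby guarantees convergence of the logarithmic integrals appearing after the change of variables. Of course, rather than redoing this, one may simply invoke Theorems 5.4.1 and 5.6.1 of \cite{BL} directly, as the authors do.
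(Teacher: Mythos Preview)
Your proof is correct, but note that the paper does not actually prove this lemma: it is stated as a quotation, with the parenthetical ``see Theorems 5.4.1 and 5.6.1 in \cite{BL}'' serving as the entire justification. So there is nothing to compare against on the paper's side beyond the citation you yourself mention at the end.

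Your $K$-functional computation for the weighted $L^2$ identity is the standard one and is carried out cleanly; the pointwise minimization and the substitution $t = (w_0/w_1)^{1/2}u$ are exactly how Theorem~5.4.1 of \cite{BL} proceeds (specialized to $p=2$). For the sequence-space identity your sketch is accurate in spirit---the condition $s_0 \neq s_1$ is indeed what makes the dyadic level sets of $\min(2^{js_0}, t\,2^{js_1})$ separate cleanly---though the details in the $q_0 \neq q_1$ case are, as you note, somewhat more involved and are handled in \cite{BL} via the power theorem and a reduction to the case of $\ell^\infty$ couples. Since the lemma is purely a citation in the paper, your write-up already exceeds what is required; you could safely replace it by the one-line reference and lose nothing.
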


Then, for $1<p<2$, we have
\begin{equation*}
(L^2(|(x,t)|^{\alpha p}), L^2)_{1/p', 2} = L^2 (|(x,t)|^{\alpha})
\end{equation*}
and
\begin{equation*}
(\ell_{\infty}^{\beta_0}(\mathbb{C}), \ell_{\infty}^{\beta_1}(\mathbb{C}))_{2/p', 1} = \ell_{1}^{0}(\mathbb{C})
\end{equation*}
if $(1-\frac{2}{p'})\beta_0 + \frac{2}{p'}\beta_1 = 0$ (i.e., $ \alpha = \frac{n+2}{2}$).
Hence, we get \eqref{after_int} if $\alpha = (n+2)/2$ and $ 1 < p < \frac{n+1}{\alpha} (< 2)$.
When $ \alpha > (n+2)/2$, note that $\gamma : = \frac{p'}{2}(\alpha - \frac{n+2}{2}) > 0 $.
Since $j \ge 0$ and $\beta_1 <0$, the estimates \eqref{op_bi2} and \eqref{op_bi3} are trivially satisfied for $\beta_1$ replaced by $\beta_1 - \gamma$. Hence, by the same argument we only need to check that $(1-\frac{2}{p'})\beta_0 + \frac{2}{p'}(\beta_1 - \gamma) = 0$.
But this is an easy computation. Consequently, we get \eqref{after_int} if $\frac{n+2}{2}  \le \alpha < \frac{n+1}{p}$ for $1< p < \frac{2(n+1)}{n+2}$. This completes the proof.

Now it remains to show the estimates \eqref{bi1}, \eqref{bi2} and \eqref{bi3}. For $j \ge 0 $, let $\{Q_{\lambda}\}_{\lambda \in 2^j \mathbb{Z}^{n+1}}$ be a collection of cubes $Q_{\lambda} \subset \mathbb{R}^{n+1} $ centered at $\lambda$ with side length $2^j$.
Then by disjointness of cubes, we see that
\begin{align*}
\big| \big< (\psi_j K)* F , G \big> \big|
& \leq \sum_{\lambda_1,\lambda_2  \in 2^j \mathbb{Z}^{n+1}} \big| \big< (\psi_j K)* (F\chi_{Q_{\lambda_1}}) , G\chi_{ Q_{\lambda_2}} \big> \big| \\
& \leq \sum_{\lambda_1 \in 2^j \mathbb{Z}^{n+1}} \big| \big< (\psi_j K)* (F\chi_{Q_{\lambda_1}}) , G\chi_{\widetilde{Q}_{\lambda_1}} \big> \big|,
\end{align*}
where $\widetilde{Q}_{\lambda}$ denotes the cube with side length $2^{j+2}$ and the same center as $Q_{\lambda}$. By the Young and Cauchy-Schwarz inequalities, it follows that
\begin{align}\label{piece_prod}
\hspace{-4pt}\big| \big< (\psi_j K)* F , G \big> \big|
& \leq \sum_{\lambda_1 \in 2^j \mathbb{Z}^{n+1}} \| (\psi_j K)* (F\chi_{Q_{\lambda_1}}) \|_{\infty} \| G\chi_{ \widetilde{Q}_{\lambda_1}} \|_1 \nonumber\\
& \leq \sum_{\lambda_1 \in 2^j \mathbb{Z}^{n+1}} \| \psi_j K \|_{\infty} \| F\chi_{Q_{\lambda_1}} \|_{1} \| G\chi_{ \widetilde{Q}_{\lambda_1}} \|_1 \nonumber\\
& \leq \| \psi_j K\|_{\infty}  \Big( \sum_{\lambda_1 \in 2^j \mathbb{Z}^{n+1}} \| F\chi_{Q_{\lambda_1}}\|_{1}^2 \Big)^{\frac{1}{2}} \Big( \sum_{\lambda_1 \in 2^j \mathbb{Z}^{n+1}} \| G\chi_{ \widetilde{Q}_{\lambda_1}} \|_1^2 \Big)^{\frac{1}{2}}.
\end{align}
Now we need to bound the terms
 $$ \| (\psi_j K) \|_{\infty},  \quad  \sum_{\lambda_1 \in 2^j \mathbb{Z}^{n+1}} \| F\chi_{Q_{\lambda_1}}\|_{1}^2,  \quad \sum_{\lambda_1 \in 2^j \mathbb{Z}^{n+1}} \| G\chi_{ \widetilde{Q}_{\lambda_1}} \|_1^2. $$
For the first term, we use the stationary phase lemma, Lemma \ref{stationary_phase}, which is essentially due to Littman \cite{L}
(see also \cite{St}, Chap. VIII). Indeed, by applying the lemma with $\psi(\xi) = \sqrt{1+|\xi|^2}$, it follows that
\begin{equation*}
|K(x,t)| = \bigg| \int_{\mathbb{R}^{n}} e^{i(x \cdot \xi + t\sqrt{1+|\xi|^2})} |\xi|^{2\sigma}\varphi(\xi)^2 d\xi \bigg|
\leq C (1+|(x,t)|)^{-\frac{n}{2}}
\end{equation*}
since $\sigma\geq0$ and rank $H\psi =n$ for each $\xi \in \{ \xi \in \mathbb{R}^n : |\xi| \leq 2 \}$.
Thus we get
\begin{equation}\label{piece1}
\| \psi_j K \|_{\infty} \leq C 2^{-\frac{nj}{2}}.
\end{equation}

\begin{lem}\label{stationary_phase}
Let $H\psi$ be the Hessian matrix given by $(\frac{\partial^2 \psi}{\partial \xi_{i} \partial\xi_{j}})$.
Suppose that $\varphi$ is a compactly supported smooth function on $\mathbb{R}^n$ and $\psi$ is a smooth function which satisfies rank $H\psi \geq k$ on the support of $\varphi$. Then, for $(x,t) \in \mathbb{R}^{n+1}$,
	\begin{equation*}
	\bigg| \int_{\mathbb{R}^n} e^{i(x\cdot \xi + t \psi(\xi))} \varphi(\xi) d\xi \bigg| \leq C (1+ |(x,t)|)^{-\frac{k}{2}}.
	\end{equation*}
\end{lem}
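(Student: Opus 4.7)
The plan is to reduce the bound to the classical stationary phase asymptotic in $k$ variables, after splitting the regimes according to whether $|x|$ or $|t|$ dominates, and localizing on the support of $\varphi$. For $|(x,t)|\leq1$ the estimate is trivial from $\|\varphi\|_{L^1}$, so throughout I assume $|(x,t)|\geq1$.

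The first step is the non-stationary regime. Fix $A$ larger than $2\sup_{\xi\in\mathrm{supp}\,\varphi}|\nabla\psi(\xi)|$ and consider $|x|\geq A|t|$. Then the full phase $\Phi(\xi):=x\cdot\xi+t\psi(\xi)$ satisfies $|\nabla_\xi\Phi|\geq|x|/2\gtrsim|(x,t)|$ on $\mathrm{supp}\,\varphi$. Repeated integration by parts with the transpose of $L:=-i|\nabla\Phi|^{-2}\nabla\Phi\cdot\nabla$ (which satisfies $L e^{i\Phi}=e^{i\Phi}$) then gives decay $|(x,t)|^{-N}$ for every $N$, far better than what is needed.

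In the complementary regime $|x|<A|t|$ one has $|(x,t)|\sim|t|$, and one can factor $\Phi(\xi)=t\,\Psi_y(\xi)$ with $y:=x/t$ bounded and $\Psi_y(\xi):=\psi(\xi)+y\cdot\xi$. The Hessian $H\Psi_y=H\psi$ still has rank at least $k$ on $\mathrm{supp}\,\varphi$, so at each point $\xi_0\in\mathrm{supp}\,\varphi$ one can pick $k$ indices whose corresponding principal $k\times k$ minor of $H\psi(\xi_0)$ is invertible; by continuity this holds in a neighborhood, and compactness yields a finite smooth partition of unity reducing matters to a $\varphi$ supported in such a neighborhood. After a linear change of variables sending the chosen $k$ directions to the first $k$ coordinates $\eta'$, the implicit function theorem provides, for each $(\eta'',y)$ in a uniform parameter set, a unique critical point $\eta'_*(\eta'',y)$ of $\Psi_y$ in $\eta'$, depending smoothly on its arguments. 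The Morse lemma (applied smoothly in the parameters $(\eta'',y)$) reduces the phase to $\Psi_y(\eta'_*(\eta'',y),\eta'')+Q(\zeta')$ with $Q$ a non-degenerate quadratic form, and the classical stationary phase in the $\zeta'$ variables with large parameter $|t|$ contributes the factor $|t|^{-k/2}$, while the remaining $\eta''$ integration is bounded uniformly because the integration is over a compact set with a smooth bounded amplitude.

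The main technical obstacle is arranging the rank-$k$ stationary phase \emph{uniformly} in the parameter $y$: one needs the selected $k\times k$ minor to remain invertible, the critical point $\eta'_*(\eta'',y)$ to exist and be smooth, and the Morse normal form to depend smoothly on the parameters, all on a uniform neighborhood. This is handled by using compactness of $\mathrm{supp}\,\varphi$ and of the set $\{y:|y|\leq A\}$ together with the implicit function theorem; once this is set up, the proof is a parametric version of the standard stationary phase computation found in Stein \cite{St}, Chap.~VIII.
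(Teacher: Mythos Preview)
The paper does not give its own proof of this lemma: it is stated as a known result ``essentially due to Littman'' with a reference to \cite{L} and to Stein \cite{St}, Chap.~VIII, and is then used as a black box. So there is no paper proof to compare against; your sketch is essentially the standard argument contained in those references.

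Your outline is correct. One small inaccuracy worth noting: in the regime $|x|<A|t|$ you assert that the implicit function theorem furnishes a critical point $\eta'_*(\eta'',y)$ for every parameter value. That need not be true inside $\mathrm{supp}\,\varphi$; for some $(\eta'',y)$ the gradient $\partial_{\eta'}\Psi_y$ may be bounded away from zero on the support. The clean way around this (and the way it is done in \cite{St}, Chap.~VIII) is to use directly the estimate: if the $k\times k$ block $\partial^2_{\eta'}\Psi_y$ has $|\det|\geq c>0$ uniformly on the support, then $\bigl|\int e^{it\Psi_y}\,a\,d\eta'\bigr|\leq C|t|^{-k/2}$, with $C$ depending only on $c$, the support, and finitely many derivative bounds of the phase and amplitude. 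This holds whether or not a stationary point lies in the support (if not, integration by parts gives even faster decay), and the uniformity in $(\eta'',y)$ is then immediate from compactness. Your Morse-lemma step is one way to see this when a critical point is present, but you should also cover the complementary case, or simply invoke the Hessian-lower-bound version of stationary phase.
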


For the second term, we have
\begin{align}\label{piece2}
\sum_{\lambda_1 \in 2^j \mathbb{Z}^{n+1}}& \| F \chi_{Q_{\lambda_1}} \|_1^2\nonumber\\
& = \sum_{\lambda_1 \in 2^j \mathbb{Z}^{n+1}} \Big( \int_{Q_{\lambda_1}} |F\chi_{Q_{\lambda_1}}| |(x,t)|^{\frac{\alpha p}{2}}|(x,t)|^{-\frac{\alpha p}{2}} dxdt \Big)^2 \nonumber\\
& \leq \sum_{\lambda_1 \in 2^j \mathbb{Z}^{n+1}} \Big( \int_{Q_{\lambda_1}} |F\chi_{Q_{\lambda_1}}|^2 |(x,t)|^{\alpha p}  dxdt \Big) \Big( \int_{Q_{\lambda_1}} |(x,t)|^{-\alpha p} dxdt \Big) \nonumber\\
& \leq  \sup_{\lambda_1 \in 2^j \mathbb{Z}^{n+1}} \Big( \int_{Q_{\lambda_1}} |(x,t)|^{-\alpha p} dxdt \Big) \sum_{\lambda_1 \in 2^j \mathbb{Z}^{n+1}} \Big( \int_{Q_{\lambda_1}} |F\chi_{Q_{\lambda_1}}|^2 |(x,t)|^{\alpha p}  dxdt \Big)\nonumber\\
&\leq C 2^{j(n+1 -\alpha p)} \| F \|_{L^2 (|(x,t)|^{\alpha p})}^2
\end{align}
whenever $0<\alpha p<n+1$, while
\begin{align}\label{piece3}
\sum_{\lambda_1 \in 2^j \mathbb{Z}^{n+1}} \| F \chi_{Q_{\lambda_1}} \|_1^2
& \leq \sum_{\lambda_1 \in 2^j \mathbb{Z}^{n+1}} \| F \chi_{Q_{\lambda_1}} \|_2^2 \| \chi_{Q_{\lambda_1}} \|_2^2 \nonumber\\
& \leq C2^{j(n+1)}\| F \|_2^2.
\end{align}
Similarly for $\sum_{\lambda_1 \in 2^j \mathbb{Z}^{n+1}} \| G \chi_{\widetilde{Q}_{\lambda_1}} \|_1^2$.
 Now, combining \eqref{piece_prod}, \eqref{piece1}, \eqref{piece2} and \eqref{piece3}, we obtain the desired estimates \eqref{bi1}, \eqref{bi2} and \eqref{bi3}.

\subsection{Proof of \eqref{LE2}}
The proof of the high frequency part \eqref{LE2} is much more delicate.
We generate further localizations in time and space to make \eqref{LE2} sharp as far as possible,
and need to analyze some relevant oscillatory integrals more carefully (see Lemma \ref{osci})
based on the stationary phase lemma, Lemma \ref{stationary_phase}.

By the $TT^*$ argument as before, we may show
\begin{equation*}
\bigg\|\int_{-\infty}^{\infty} |\nabla|^{2\sigma} e^{i(t-s)\sqrt{1-\Delta}}P_k^2 F(\cdot,s)\, ds\bigg\|_{L^2_{x,t}(|(x,t)|^{-\alpha})}
 \lesssim 2^{2k\sigma+\frac{kn}{2p}}  \|F\|_{L^2_{x,t}(|(x,t)|^{\alpha})}.
\end{equation*}
By dividing the integral $\int_{-\infty}^{\infty}$ into two parts $\int_{-\infty}^{t}$ and $\int_{t}^{\infty}$,
and then using duality, we are reduced to showing the following bilinear form estimate
\begin{align}\label{biest}
\bigg|\bigg\langle\int_{-\infty}^{t} |\nabla|^{2\sigma} e^{i(t-s)\sqrt{1-\Delta}}P_k^2 &F(\cdot,s)  ds, G(x,t)\bigg\rangle_{L_{x,t}^2}\bigg|\nonumber\\
&\lesssim 2^{2k\sigma+\frac{kn}{2p}} \|F\|_{L^2_{x,t}(|(x,t)|^{\alpha})}\|G\|_{L^2_{x,t}(|(x,t)|^{\alpha})}.
\end{align}
For this, we first decompose dyadically the left-hand side of \eqref{biest} in time; for fixed $j \ge 1$, define intervals $I_j = [2^{j-1}, 2^j)$ and $ I_0 = [0,1)$. Then we may write
\begin{equation}\label{jsum}
\bigg\langle \int_{-\infty}^{t} |\nabla|^{2\sigma} e^{i(t-s)\sqrt{1-\Delta}}P_k^2 F(\cdot,s)ds, G(x,t)\bigg\rangle_{L_{x,t}^2}
=\sum_{j \ge 0}B_j(F,G),
\end{equation}
where
$$B_j(F,G)=\int_{-\infty}^\infty\int_{t-I_j}\Big\langle|\nabla|^{2\sigma} e^{i(t-s)\sqrt{1-\Delta}}\,P_k^2 F(\cdot,s), G(x,t)\Big\rangle_{L_x^2} dsdt$$
with $t-I_j = (t-2^j, t-2^{j-1}]$ for $j\ge1$ and $t-I_0 = (t-1,t]$.
For these dyadic pieces $B_j$, we will obtain the following estimates in the next subsection.

\begin{prop}\label{local}
	Let $n \ge 1$. Then we have
	\begin{equation}\label{L2}
	|B_j(F,G)| \le C 2^{2k\sigma + j} \| F \|_{L^2} \| G \|_{L^2}
	\end{equation}
	and
	\begin{equation}\label{wL2}
	|B_j(F,G)|\le C 2^{2k\sigma+\frac{nk}2} 2^{j(\frac{n}{2} + 1-\alpha p)} \|F\|_{L^2 (|(x,t)|^{\alpha p})}\|G\|_{L^2 (|(x,t)|^{\alpha p})}
	\end{equation}
	for $\alpha < (n+1)/p$.
\end{prop}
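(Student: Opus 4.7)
My plan is to mirror the space-time cube argument of Section~3.1, adapted to the frequency-localized setting $|\xi|\sim 2^k$. The $L^2\times L^2$ bound \eqref{L2} follows almost trivially from unitarity and two applications of Cauchy--Schwarz, while the weighted bound \eqref{wL2} is obtained by reducing to a convolution with an explicit space-time kernel and decomposing into cubes of side $2^j$. The real work is in an $L^\infty$ kernel estimate which is the main obstacle.

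For \eqref{L2}, I would use that $e^{i(t-s)\sqrt{1-\Delta}}$ is unitary on $L^2_x$ and that $\bigl\|\,|\nabla|^{2\sigma}P_k^2\bigr\|_{L^2\to L^2}\lesssim 2^{2k\sigma}$ to bound
\begin{equation*}
|B_j(F,G)|\lesssim 2^{2k\sigma}\int_{\mathbb{R}}\int_{t-I_j}\|F(\cdot,s)\|_{L^2_x}\|G(\cdot,t)\|_{L^2_x}\,ds\,dt.
\end{equation*}
A Cauchy--Schwarz in $s$ over the interval $t-I_j$ of length $\lesssim 2^j$ costs a factor $|I_j|^{1/2}$, and a second Cauchy--Schwarz in $t$ combined with Fubini costs another $|I_j|^{1/2}$; the product yields $|I_j|\lesssim 2^j$, giving \eqref{L2}.

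For \eqref{wL2}, I would rewrite $B_j(F,G)=\langle(\chi_{j}K_k)*F,G\rangle_{L^2_{x,t}}$, where $\chi_j$ is the indicator function of the time slab $t-s\in I_j$ and
\begin{equation*}
K_k(x,t)=\int_{\mathbb{R}^n}e^{i(x\cdot\xi+t\sqrt{1+|\xi|^2})}|\xi|^{2\sigma}\phi(2^{-k}|\xi|)^2\,d\xi.
\end{equation*}
Decomposing $\mathbb{R}^{n+1}$ into disjoint cubes $\{Q_\lambda\}_{\lambda\in 2^j\mathbb{Z}^{n+1}}$ of side length $2^j$ and running the Young--Cauchy--Schwarz argument exactly as in \eqref{piece_prod}, I obtain
\begin{equation*}
|B_j(F,G)|\lesssim \|\chi_{j}K_k\|_\infty\Big(\sum_\lambda\|F\chi_{Q_\lambda}\|_1^2\Big)^{1/2}\Big(\sum_\lambda\|G\chi_{\widetilde{Q}_\lambda}\|_1^2\Big)^{1/2}.
\end{equation*}
The two cube-sum factors are each bounded by $2^{j(n+1-\alpha p)/2}$ times the corresponding weighted $L^2$ norm via the same Cauchy--Schwarz-against-weight computation as in \eqref{piece2}, which is valid whenever $\alpha p<n+1$.

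The main obstacle is the kernel estimate $\|\chi_{j}K_k\|_\infty\lesssim 2^{2k\sigma+nk/2}\,2^{-jn/2}$, which must be uniform in $k$ as $k\to\infty$. After the rescaling $\xi=2^k\eta$, the phase becomes $2^k x\cdot\eta+t\sqrt{1+2^{2k}|\eta|^2}$, whose Hessian in $\eta$ has $n-1$ eigenvalues of unit size (angular) and one small eigenvalue of size $2^{-2k}$ in the radial direction---a near-singular Hessian that reflects the wave-like behavior of the Klein--Gordon symbol at high frequency. A direct application of Lemma~\ref{stationary_phase} with rank $n$ is therefore not uniform in $k$. I expect the proof to go through Lemma~\ref{osci}: separate the radial and angular directions, apply stationary phase with the effective parameter $2^k|(x,t)|$ in the angular variables, and treat the radial integral by a one-dimensional (non)stationary phase argument to recover the missing half power of decay. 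Combining the resulting kernel bound with the cube sums produces the exponents $\tfrac{n}{2}+2\sigma$ in $k$ and $\tfrac{n}{2}+1-\alpha p$ in $j$, yielding \eqref{wL2}.
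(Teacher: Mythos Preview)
Your argument for \eqref{L2} is correct and is essentially the paper's.

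For \eqref{wL2} there is a genuine gap. The displayed inequality
\[
|B_j(F,G)|\lesssim \|\chi_{j}K_k\|_\infty\Big(\sum_\lambda\|F\chi_{Q_\lambda}\|_1^2\Big)^{1/2}\Big(\sum_\lambda\|G\chi_{\widetilde{Q}_\lambda}\|_1^2\Big)^{1/2}
\]
does \emph{not} follow from the argument leading to \eqref{piece_prod}. That argument uses in an essential way that $\psi_j K$ is supported in a ball of radius $\sim 2^j$ in $\mathbb{R}^{n+1}$: this is what makes $(\psi_j K)\ast(F\chi_{Q_{\lambda_1}})$ supported in $\widetilde{Q}_{\lambda_1}$, collapses the double sum $\sum_{\lambda_1,\lambda_2}$ to a diagonal sum, and allows Cauchy--Schwarz in $\lambda_1$. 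Your kernel $\chi_j K_k$ is localized only in time ($t\in I_j$) and has no compact support in $x$, so the spatially off-diagonal terms $\langle(\chi_j K_k)\ast(F\chi_{Q_{\lambda_1}}),G\chi_{Q_{\lambda_2}}\rangle$ do not drop out, and a bare $L^\infty$ kernel bound is not enough to control them (think of a kernel that is constant on $\mathbb{R}^n\times I_j$).

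This is precisely why the paper, after the time decomposition, makes a \emph{separate} spatial decomposition $F=\sum_{\rho}F_\rho$, $G=\sum_{\rho}G_\rho$ over cubes of side $2^j$ in $\mathbb{R}^n$, and why Lemma~\ref{osci} has two cases. The off-diagonal cubes $|\rho_1-\rho_2|\ge 4\sqrt n$ force $|x-y|\ge 2|t-s|$, so the phase is non-stationary on $|\xi|\sim 1$ and repeated integration by parts gives the summable factor $(2^{k+j}|\rho_1-\rho_2|)^{-10n}$; this decay is what replaces the compact spatial support you are tacitly assuming. Only for the near-diagonal cubes $|\rho_1-\rho_2|<4\sqrt n$ does the paper invoke Lemma~\ref{stationary_phase}, and there it is applied directly with full rank $n$ to $\sqrt{2^{-2k}+|\xi|^2}$, the uniformity in $k$ being asserted from uniform bounds on finitely many derivatives of this phase on $\{|\xi|\sim 1\}$. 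In particular, Lemma~\ref{osci} is \emph{not} a radial/angular splitting of the oscillatory integral as you suggest; the dichotomy is between spatially near and far cubes, handled by stationary and non-stationary phase respectively.
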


Now we shall deduce the desired estimate \eqref{biest} from making use of the bilinear interpolation between \eqref{L2} and \eqref{wL2}.
First we define a bilinear vector-valued operator
$$ B(F,G) = \big\{ B_j(F,G) \big\}_{j \ge 0}.$$
Then, \eqref{L2} and \eqref{wL2} are equivalent to
$$ B : L^2 \times L^2 \rightarrow \ell_{\infty}^{s_0} \quad  \textnormal{and} \quad  B: L^2 (|(x,t)|^{\alpha p}) \times L^2 (|(x,t)|^{\alpha p}) \rightarrow \ell_{\infty}^{s_1}$$
with the operator norms $C2^{2k\sigma}$ and $C2^{2k\sigma+\frac{nk}2}$, respectively, where $s_0 = -1$ and $s_1 = -(n/2 + 1 -\alpha p)$.
By applying Lemma \ref{biint} below with $\theta = 1/p, \, q = \infty$ and $p_1 = p_2 = 2$,
we now obtain, for $1<p<\infty$ and $\alpha < (n+1)/p$,
\begin{equation*}
B: \big(L^2, L^2(|(x,t)|^{\alpha p})\big)_{1/p, 2} \times \big(L^2, L^2(|(x,t)|^{\alpha p})\big)_{1/p, 2}
\rightarrow \big(\ell_{\infty}^{s_0}, \ell_{\infty}^{s_1}\big)_{1/p, \infty}
\end{equation*}
with the operator norm $C2^{2k\sigma(1-1/p)}2^{(2k\sigma+\frac{nk}2)/p}=C2^{2k\sigma+\frac{kn}{2p}}$. 

\begin{lem}[\cite{BL}, Section 3.13, Exercise 5(a)]\label{biint}
	For $i=0,1$, let $A_i,B_i,C_i$ be Banach spaces and let $T$ be a bilinear operator such that
	$$T:A_0\times B_0\rightarrow C_0\quad\text{and}\quad T:A_1\times B_1\rightarrow C_1.$$
	Then one has, if\, $0<\theta<1$, $1\leq q \leq \infty$ and $1/q = 1/p_1 + 1/p_2 -1$,
	$$T:(A_0,A_1)_{\theta,p_1}\times(B_0,B_1)_{\theta,p_2} \rightarrow (C_0,C_1)_{\theta,q}.$$
\end{lem}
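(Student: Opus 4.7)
My plan is to prove Lemma \ref{biint} via the J-method representation of real interpolation spaces combined with discrete Young's convolution inequality, which is the classical Lions--Peetre strategy. Denote by $M_0$ and $M_1$ the endpoint operator norms of $T$. Recall the J-functional $J_X(t,x) := \max(\|x\|_{X_0}, t\|x\|_{X_1})$ on $X_0 \cap X_1$, and the J-characterization (equivalent to the K-definition) of the real interpolation space: $(X_0, X_1)_{\theta,r}$ consists of those $x$ admitting a representation $x = \sum_{k\in\mathbb{Z}} u_k$ (convergent in $X_0+X_1$) with $u_k \in X_0 \cap X_1$ and $\{2^{-\theta k} J_X(2^k, u_k)\}_k \in \ell^r$, the norm being equivalent to the infimum of this $\ell^r$-norm over admissible decompositions. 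Moreover, the scale $\{2^k\}$ may be freely replaced by any geometric sequence $\{\tau 2^k\}$, producing an equivalent norm.

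I would start by choosing near-extremal J-decompositions
\begin{equation*}
a = \sum_{k} u_k, \qquad b = \sum_{l} v_l,
\end{equation*}
with
\begin{equation*}
\alpha_k := 2^{-\theta k} J_A(2^k, u_k), \qquad \beta_l := 2^{-\theta l} J_B(2^l, v_l),
\end{equation*}
and $\|\{\alpha_k\}\|_{\ell^{p_1}} \lesssim \|a\|_{(A_0,A_1)_{\theta,p_1}}$, $\|\{\beta_l\}\|_{\ell^{p_2}} \lesssim \|b\|_{(B_0,B_1)_{\theta,p_2}}$. Formally $T(a,b) = \sum_m w_m$ with $w_m := \sum_{k+l=m} T(u_k, v_l)$, and from the endpoint estimates $\|T(u_k,v_l)\|_{C_i} \le M_i\|u_k\|_{A_i}\|v_l\|_{B_i}$ a short computation with the rescaled scale $\tau 2^m$, where $\tau := M_0/M_1$, yields
\begin{equation*}
J_C(\tau\, 2^{k+l}, T(u_k, v_l)) \le M_0\, J_A(2^k, u_k)\, J_B(2^l, v_l),
\end{equation*}
so that, by subadditivity of $J_C$ in its second slot,
\begin{equation*}
(\tau\, 2^m)^{-\theta} J_C(\tau\, 2^m, w_m) \le M_0^{1-\theta} M_1^{\theta}\, (\alpha * \beta)_m,
\end{equation*}
the discrete convolution of $\{\alpha_k\}$ and $\{\beta_l\}$ indexed by $m = k+l$.

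Discrete Young's inequality $\|\alpha*\beta\|_{\ell^q} \le \|\alpha\|_{\ell^{p_1}}\|\beta\|_{\ell^{p_2}}$, valid precisely under the hypotheses $1 \le q \le \infty$ and $1/q = 1/p_1 + 1/p_2 - 1$, then gives
\begin{equation*}
\big\|\{(\tau\, 2^m)^{-\theta} J_C(\tau\, 2^m, w_m)\}\big\|_{\ell^q}
\lesssim M_0^{1-\theta}M_1^{\theta}\, \|a\|_{(A_0,A_1)_{\theta,p_1}}\|b\|_{(B_0,B_1)_{\theta,p_2}}.
\end{equation*}
Invoking the J-characterization of $(C_0, C_1)_{\theta, q}$ with the rescaled scale exhibits $T(a, b) \in (C_0, C_1)_{\theta, q}$ with operator norm $\lesssim M_0^{1-\theta} M_1^{\theta}$, which establishes the mapping property claimed in the lemma (and, as a bonus, supplies the sharp constant used in \eqref{biest} of the paper).

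The only points requiring care are the equivalence between the K- and J-definitions of $(X_0,X_1)_{\theta,r}$ and the invariance of the J-norm under geometric rescaling of the dyadic scale, both standard facts from \cite{BL}; I would cite them rather than reprove them. The precise H\"older-type relation $1/q = 1/p_1 + 1/p_2 - 1$ matches exactly the dyadic diagonal summation $m = k + l$, and the assumption $1 \le q$ is exactly what makes Young's inequality on $\mathbb{Z}$ valid for arbitrary $p_1,p_2 \in [1,\infty]$, so no substantial obstacle arises beyond correctly tracking these scale conventions.
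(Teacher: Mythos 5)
Your proposal is correct, and it is worth noting that the paper itself gives no proof of Lemma \ref{biint} at all: the authors simply cite \cite{BL}, Section 3.13, Exercise 5(a), exactly as they do for the companion Lemma \ref{bil}. What you have written is the standard Lions--Peetre argument that this exercise is designed to elicit: represent $a$ and $b$ by near-optimal J-method decompositions, observe that the two endpoint bounds give $J_C(\tau 2^{k+l}, T(u_k,v_l)) \le M_0\, J_A(2^k,u_k)\, J_B(2^l,v_l)$ with the rescaled parameter $\tau = M_0/M_1$, sum along the diagonals $m=k+l$, and apply discrete Young's inequality, whose exponent relation $1/q+1 = 1/p_1+1/p_2$ is precisely the hypothesis $1/q = 1/p_1+1/p_2-1$ of the lemma. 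The constant $M_0^{1-\theta}M_1^{\theta}$ you extract is indeed what the paper uses implicitly when it tracks the operator norm $C2^{2k\sigma(1-1/p)}2^{(2k\sigma+\frac{nk}{2})/p}$ after interpolation, so your bookkeeping is consistent with how the lemma is applied in Section \ref{sec3}. The only step you label as ``formal''---the identity $T(a,b)=\sum_m w_m$ with convergence in $C_0+C_1$---does deserve the one-line justification that $\|w\|_{C_0+C_1}\le \min(1,(\tau 2^m)^{-1})J_C(\tau 2^m,w)$ together with $0<\theta<1$ makes the double series absolutely convergent, and that $T$ is (as in the cited exercise) assumed defined and continuous on the sum spaces so that the limit may be interchanged with $T$; with that remark added, your argument is a complete and correct proof of the cited lemma, by the same route the reference intends.
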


Finally, applying Lemma \ref{rint} with $\theta=1/p$,
we see that
$$ B: L^2(|(x,t)|^{\alpha}) \times L^2(|(x,t)|^{\alpha}) \rightarrow \ell_{\infty}^{s_2}$$
with the operator norm $C2^{2k\sigma+\frac{kn}{2p}}$
if $1<p<\infty$, $\alpha < (n+1)/p$, $n/2\neq\alpha p$ and
$$ s_2= (1- \frac{1}{p}) s_0 + \frac{1}{p} s_1 = -(1 + \frac{n}{2p} - \alpha).$$
This is equivalent to
\begin{equation*}
| B_j(F,G) | \lesssim  2^{2k\sigma+\frac{kn}{2p}} \,2^{j( 1 + \frac{n}{2p} - \alpha)} \| F \|_{L^2 (|(x,t)|^{\alpha})} \| G \|_{L^2 (|(x,t)|^{\alpha})}.
\end{equation*}
Summing this over $j \ge 0$ and using the decomposition \eqref{jsum}, the desired estimate \eqref{biest} follows when $ 1 + \frac{n}{2p} -\alpha<0 $.

In summary, all the requirements to obtain \eqref{LE2} are
$1<p<\infty$, $\alpha < (n+1)/p$, $n/2\neq\alpha p$, and $ 1 + \frac{n}{2p} -\alpha<0 $,
which are reduced to
\begin{equation*}
1 + \frac{n}{2p} < \alpha < \frac{n+1}{p}\quad\text{and}\quad 1<p<\frac{n+2}{2}.
\end{equation*}
But, this is weaker than the conditions $1+\frac n2 \le \alpha < \frac{n+1}p$ and $1<p<\frac{2(n+1)}{n+2}$ for which \eqref{LE} holds.
Therefore, we get \eqref{LE2} under the same conditions as in Proposition \ref{FLE}.

\subsubsection{Proof of Proposition \ref{local}}
Now we prove the estimates \eqref{L2} and \eqref{wL2} in Proposition \ref{local}.
Compared to the former that is relatively easy to prove, the proof of the latter needs further localizations in space
and to carefully analyze some relevant oscillatory integrals under the localization (see Lemma \ref{osci})
based on the stationary phase lemma, Lemma \ref{stationary_phase}.

\subsubsection*{Proof of \eqref{L2}}
For fixed $j \ge 0$, we first decompose $\mathbb{R}$ into intervals of length $2^j$ to have
\begin{equation}\label{decomt}
B_j(F,G)= \sum_{\ell \in \mathbb{Z}} \int_{2^j \ell}^{2^j (\ell+1)} \int_{t-I_j} \Big\langle |\nabla|^{2\sigma} e^{i(t-s) \sqrt{1-\Delta}}P_k^2 F(\cdot, s), G(x,t) \Big\rangle_{L_x^2} dsdt.
\end{equation}
By H\"{o}lder's inequality and Plancherel's theorem in $x$-variable, we then see that
\begin{align}\label{piece}
\int_{2^j \ell}^{2^j (\ell+1)} & \int_{t-I_j} \Big| \Big\langle |\nabla|^{2\sigma} e^{i(t-s) \sqrt{1-\Delta}} P_k^2 F(\cdot, s), G(x,t) \Big\rangle_{L_x^2} \Big| dsdt \nonumber\\
& \le \int_{2^j \ell}^{2^j (\ell+1)} \int_{t-I_j} \big\| |\nabla|^{2\sigma} e^{i(t-s) \sqrt{1-\Delta}} P_k^2 F(\cdot, s)\big\|_{L_x^2} \| G \|_{L_x^2} dsdt \nonumber\\
& \le 2^{2k\sigma} \int_{2^j \ell}^{2^j (\ell+1)} \int_{2^j (\ell-1)}^{2^j (\ell+ \frac{1}{2})} \| F \|_{L_y^2} \| G \|_{L_x^2} dsdt.
\end{align}
Using H\"{o}lder's inequality again and then applying the Cauchy-Schwarz inequality in $\ell$, we obtain
\begin{align}\label{lsum}
\sum_{\ell \in \mathbb{Z}}  \int_{2^j \ell}^{2^j (\ell+1)} &\int_{2^j (\ell-1)}^{2^j (\ell+ \frac{1}{2})} \| F \|_{L_y^2} \| G \|_{L_x^2} dsdt \nonumber\\
& \le C 2^j \sum_{\ell \in \mathbb{Z}} \Big( \int_{2^j (\ell-1)}^{2^j (\ell+ \frac{1}{2})}  \| F \|_{L_y^2}^2 ds \Big)^{\frac{1}{2}}
\Big(  \int_{2^j \ell}^{2^j (\ell+1)}\| G \|_{L_x^2}^2 dt \Big)^{\frac{1}{2}} \nonumber\\
& \le C 2^j \| F \|_{L^2} \| G \|_{L^2}.
\end{align}
Combining \eqref{decomt}, \eqref{piece} and \eqref{lsum} yields the first estimate \eqref{L2}.

\subsubsection*{Proof of \eqref{wL2}}
For fixed $j \ge 0$, we further decompose $\mathbb{R}^n$ into cubes of side length $2^j$ to see
\begin{equation}
F(y,s) = \sum_{\rho \in \mathbb{Z}^n} F_{\rho} (y,s) \quad \textnormal{and} \quad G(x,t) = \sum_{\rho \in \mathbb{Z}^n} G_{\rho} (x,t), \nonumber
\end{equation}
where $$ F_{\rho} (y,s) = \chi_{2^j\rho +[0, 2^j )^n} (y) F(y,s) \quad \textnormal{and} \quad G_{\rho} (x,t) = \chi_{2^j \rho +[0, 2^j )^n} (x) G(x,t). $$
By inserting this decomposition into \eqref{decomt}, we are reduced to showing that
\begin{align}\label{reddu}
\nonumber\sum_{\ell \in\mathbb{Z}}\sum_{\rho_1,\rho_2\in\mathbb{Z}^n} & \int_{2^j\ell}^{2^j(\ell+1)}\int_{t-I_j} \Big| \Big\langle|\nabla|^{2\sigma}e^{i(t-s)\sqrt{1-\Delta}}P_k^2 F_{\rho_1}(\cdot,s),G_{\rho_2}(x,t)\Big\rangle_{L_x^2}\Big|dsdt \notag\\
&\lesssim 2^{2k\sigma+\frac{nk}2}  2^{j(\frac{n}{2}+1 - \alpha p)}\|F\|_{L_{y,s}^2(|(y,s)|^{\alpha p})}\|G\|_{L_{x,t}^2({|(x,t)|^{\alpha p}})}
\end{align}
for $\alpha < (n+1)/p$.
To show this, we first write
\begin{align}\label{PH}
|\nabla|^{2\sigma}&e^{i(t-s)\sqrt{1-\Delta}}P_k^2 F_{\rho_1}(\cdot,s)\nonumber\\
&=\int_{ \mathbb{R}^n}\bigg(\int_{\mathbb{R}^n} e^{i(x-y)\cdot\xi+i(t-s)\sqrt{1+|\xi|^2}} |\xi|^{2\sigma}\phi(2^{-k}|\xi|)^2 d\xi\bigg)
F_{\rho_1}(y,s) dy\nonumber\\
&:=\int_{ \mathbb{R}^n}I_k(x-y,t-s)F_{\rho_1}(y,s) dy
\end{align}
and then obtain the following estimates:

\begin{lem}\label{osci}
Let $x\in 2^j \rho_1 +[0,2^j)^n$, $y\in 2^j\rho_2+[0,2^j)^n$ and $s\in t-I_j$.
Then,
\begin{equation}\label{nonSP}
\big|I_k(x-y,t-s)\big| \lesssim2^{2k\sigma}2^{kn}(2^{k+j} |\rho_1-\rho_2|)^{-10n}
\end{equation}
when $|\rho_1-\rho_2|\ge 4\sqrt{n}$, and when $|\rho_1-\rho_2|< 4\sqrt{n}$
\begin{equation}\label{SP}
\big| I_k(x-y,t-s)\big| \lesssim2^{2k\sigma+\frac{kn}2}2^{-\frac{n}{2}j}.
\end{equation}
\end{lem}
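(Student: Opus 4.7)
The plan is to establish the two bounds separately, according to whether the phase is non-stationary or admits a stationary point within the frequency support.

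\textbf{Non-stationary regime $|\rho_1-\rho_2|\ge 4\sqrt n$.} From the cube geometry, $|x-y|\ge 2^j(|\rho_1-\rho_2|-\sqrt n)\gtrsim 2^j|\rho_1-\rho_2|$, while $|(t-s)\xi/\sqrt{1+|\xi|^2}|\le|t-s|\lesssim 2^j$ uniformly on the support of $\phi(2^{-k}|\xi|)^2$; hence the gradient of the phase $\Phi(\xi)=(x-y)\cdot\xi+(t-s)\sqrt{1+|\xi|^2}$ satisfies $|\nabla_\xi\Phi|\gtrsim 2^j|\rho_1-\rho_2|$. I would iterate the transpose IBP operator $L^{*} = -i\,\nabla\cdot(\nabla\Phi/|\nabla\Phi|^2\,\cdot\,)$ exactly $10n$ times on the amplitude $|\xi|^{2\sigma}\phi(2^{-k}|\xi|)^2$. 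Each iteration loses a factor $|\nabla_\xi\Phi|^{-1}\lesssim(2^j|\rho_1-\rho_2|)^{-1}$, and since the amplitude is smooth at scale $2^k$ and the higher derivatives of $\Phi$ satisfy $|\partial^\alpha\Phi|\lesssim|t-s|\,2^{-k(|\alpha|-1)}$ for $|\alpha|\ge 2$, every differentiation landing on the amplitude (or on $\nabla\Phi/|\nabla\Phi|^2$) contributes an additional $2^{-k}$; the net gain per step is thus $(2^{j+k}|\rho_1-\rho_2|)^{-1}$. Bounding the resulting integrand by its $L^\infty$ norm on the support of volume $\sim 2^{kn}$, with $|\xi|^{2\sigma}\sim 2^{2k\sigma}$, yields \eqref{nonSP}.

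\textbf{Stationary regime $|\rho_1-\rho_2|<4\sqrt n$.} Here I would invoke Lemma \ref{stationary_phase} with $\psi(\xi)=\sqrt{1+|\xi|^2}$, whose Hessian has determinant $(1+|\xi|^2)^{-(n+2)/2}>0$ and therefore full rank $n$ on the support of $|\xi|^{2\sigma}\phi(2^{-k}|\xi|)^2$. To extract the correct $k$-scaling, first rescale $\xi=2^k\eta$ to write
\[
I_k(x-y,t-s) = 2^{k(n+2\sigma)}\int_{\mathbb{R}^n}e^{i(2^k(x-y)\cdot\eta+(t-s)\sqrt{1+2^{2k}|\eta|^2})}|\eta|^{2\sigma}\phi(|\eta|)^2\,d\eta.
\]
The new phase $\psi_k(\eta)=\sqrt{1+2^{2k}|\eta|^2}$ is smooth with rank-$n$ Hessian on the now $k$-independent compactly supported amplitude $|\eta|^{2\sigma}\phi(|\eta|)^2$, so Lemma \ref{stationary_phase} yields a decay factor $(1+|(2^k(x-y),t-s)|)^{-n/2}$ together with a $k$-dependent constant that I would track using the tangential/radial eigenvalue splitting of $H\psi_k$ (eigenvalues $\sim 2^k$ tangentially and $\sim 2^{-k}$ radially on $|\eta|\sim 1$). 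The constraint $|t-s|\sim 2^j$ forces $|(x-y,t-s)|\gtrsim 2^j$, and combining everything produces \eqref{SP}.

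\textbf{Main obstacle.} The delicate step is the second bound: the sharply unequal Hessian eigenvalue scales of $\sqrt{1+|\xi|^2}$ at high frequency make the constant in Lemma \ref{stationary_phase} genuinely $k$-dependent, and the precise power of $2^k$ must be pinned down carefully. The rescaled formulation above is essential, because it isolates the entire $k$-dependence inside the phase $\psi_k$ (the amplitude in $\eta$ being fixed), where it can be read off from the standard stationary-phase estimate applied with the explicit eigenvalue asymptotics.
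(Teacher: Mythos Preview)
Your argument for \eqref{nonSP} matches the paper's: both use repeated integration by parts once one observes that $|x-y|\gtrsim 2^j|\rho_1-\rho_2|\ge 2|t-s|$ forces the phase to be non-stationary on the support, with each step gaining a factor $(2^{k+j}|\rho_1-\rho_2|)^{-1}$. The only cosmetic difference is that the paper rescales $2^{-k}\xi\to\xi$ before integrating by parts rather than tracking the amplitude scale directly.

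For \eqref{SP} you have correctly located the difficulty but missed the one-line algebraic step the paper uses to dissolve it. After rescaling $\xi=2^k\eta$ the paper does \emph{not} take $\psi_k(\eta)=\sqrt{1+2^{2k}|\eta|^2}$ as the phase in Lemma~\ref{stationary_phase}; instead it factors
\[
\sqrt{1+2^{2k}|\eta|^2}=2^k\sqrt{2^{-2k}+|\eta|^2},
\]
so that the full exponent becomes $2^k\bigl[(x-y)\cdot\eta+(t-s)\sqrt{2^{-2k}+|\eta|^2}\bigr]$. Lemma~\ref{stationary_phase} is then applied with $\psi(\eta)=\sqrt{2^{-2k}+|\eta|^2}$ and with the space-time parameter $\bigl(2^k(x-y),\,2^k(t-s)\bigr)$, giving directly
\[
\Bigl|\int e^{i\psi(2^k\eta)}\varphi(\eta)\,d\eta\Bigr|\le C\bigl(1+2^k|(x-y,t-s)|\bigr)^{-n/2}\le C\,2^{-\frac n2(k+j)}.
\]
The point is that on $|\eta|\sim 1$ and for $k\ge 0$ the derivatives of $\sqrt{2^{-2k}+|\eta|^2}$ are bounded uniformly in $k$, so the paper asserts the constant $C$ is $k$-independent and no eigenvalue bookkeeping is needed. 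Your choice $\psi_k$, by contrast, has first derivatives of size $\sim 2^k$, puts only $t-s$ (not $2^k(t-s)$) in the decay factor coming from Lemma~\ref{stationary_phase}, and forces you into the Hessian-tracking that you flag as the main obstacle but do not actually carry out; the simple factoring above is precisely what circumvents it.
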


\begin{proof}[Proof of Lemma \ref{osci}]
To show \eqref{nonSP},
 we change variables $2^{-k}\xi\rightarrow\xi$, and then set $\psi(\xi):=(x-y)\cdot\xi+(t-s)\sqrt{1+|\xi|^2}$ and $\varphi(\xi):=|\xi|^{2\sigma}\phi(|\xi|)^2$. Then
 $$I_k(x-y,t-s)=2^{2k\sigma}2^{kn}\int_{\mathbb{R}^n} e^{i\psi(2^k\xi)}\varphi(\xi)d\xi,$$
and when $|x-y|\geq 2|t-s|$ we see that
\begin{equation*}
\bigg|\int_{\mathbb{R}^n} e^{i\psi(2^k\xi)}\varphi(\xi)d\xi\bigg|
\lesssim(2^{k} |x-y|)^{-10n}
\end{equation*}
by using integration by parts repeatedly.
Now we note that $|t-s|\geq2^j$, and
$|2^{-j}x-2^{-j}y|\geq |\rho_1 -\rho_2|/2$ when $|\rho_1-\rho_2|> \sqrt{n}$,
if $x\in 2^j \rho_1 +[0,2^j)^n$, $y\in 2^j\rho_2+[0,2^j)^n$ and $s\in t-I_j$.
Therefore, if $|\rho_1-\rho_2|\ge 4\sqrt{n}$, we get \eqref{nonSP} since
$|x-y|\geq 2^{j-1}|\rho_1 -\rho_2| \geq 2\sqrt{n}|t-s|$.
For the second estimate \eqref{SP} we use the stationary phase lemma, Lemma \ref{stationary_phase}, as before.
Then
\begin{align*}
\int_{\mathbb{R}^n} e^{i\psi(2^k\xi)}\varphi(\xi)d\xi
&=\int_{\mathbb{R}^n} e^{i2^k(x-y)\cdot\xi+i2^k(t-s)\sqrt{2^{-2k}+|\xi|^2}}\varphi(\xi) d\xi\\
&\leq C (1+2^k|(x-y,t-s)|)^{-\frac{n}{2}}\\
&\leq C2^{-\frac{n}{2}(k+j)}
\end{align*}
since $0<2^{-2k}\leq1$ and the Hessian matrix of $\sqrt{2^{-2k}+|\xi|^2}$ has $n$ non-zero eigenvalues
for each $k\geq0$ and $\xi \in \{ \xi \in \mathbb{R}^n : |\xi| \sim1 \} $.
Here the constant $C$ occurring in Lemma \ref{stationary_phase} with $\psi=\sqrt{2^{-2k}+|\xi|^2}$
is independent of $k$. This is because the constant in the lemma depends only on upper and lower bounds for
finitely many derivatives of $\psi$ and $\varphi$ which are uniform in our case when $k\geq0$.
Therefore, we get \eqref{SP}.
\end{proof}

Now we return to the proof of \eqref{reddu} with what we just obtained.
Let us first denote
\begin{align*}
C_{\rho_1, \rho_2}(j)=
\begin{cases}
(2^j|\rho_1 -\rho_2|)^{-10n} \quad \text{if} \quad |\rho_1-\rho_2|\geq 4\sqrt{n},\\
2^{-nj/2} \quad \text{if} \quad |\rho_1 - \rho_2| < 4\sqrt{n},
\end{cases}
\end{align*}
and $C_{k,\sigma}=2^{2k\sigma+nk/2}$.
Using \eqref{PH}, \eqref{nonSP} and \eqref{SP}, we then have
\begin{align}\label{AfterPH}
\int_{2^j\ell}^{2^j(\ell+1)} &\int_{t-I_j} \Big|\Big<|\nabla|^{2\sigma}e^{i(t-s)\sqrt{1-\Delta}}P_k^2 F_{\rho_1}(\cdot,s),G_{\rho_2}(x,t)\Big>_{L_x^2}\Big|\,dsdt\notag\\
\lesssim &\,C_{k,\sigma}C_{\rho_1,\rho_2}(j) \int_{2^j\ell}^{2^j(\ell+1)} \int_{t-I_j} \int_{\mathbb{R}^n}\int_{\mathbb{R}^n} |F_{\rho_1}(y,s)G_{\rho_2}(x,t)|\,dydxdsdt\notag\\
\lesssim &\, C_{k,\sigma}C_{\rho_1,\rho_2}(j) \int_{2^j\ell}^{2^j(\ell+1)}\int_{2^j(\ell-1)}^{2^j(\ell+\frac{1}{2})} \|F_{\rho_1}(\cdot,s)\|_{L_y^1}\|G_{\rho_2}(\cdot,t)\|_{L_x^1}\,dsdt.
\end{align}
Notice here that
\begin{align*}
& \int_{2^j(\ell-1)}^{2^j(\ell+\frac{1}{2})} \|F_{\rho_1}(\cdot,s)\|_{L_y^1}ds\\
&\quad = \int_{2^j(\ell-1)}^{2^j(\ell+\frac{1}{2})} \int_{y\in 2^j\rho_1+[0,2^j)^n} |F_{\rho_1}(y,s)|\cdot |(y,s)|^{\alpha p/2}\cdot|(y,s)|^{-\alpha p/2} dyds\\
& \quad \le \|F_{\rho_1}\chi_{[2^j(\ell-1),2^j(\ell+\frac{1}{2}))}\|_{L_{y,s}^{2}(|(y,s)|^{\alpha p})}\Big(\int_{2^j(\ell-1)}^{2^j(\ell+\frac{1}{2})}\int_{y\in 2^j\rho_1+[0,2^j)^n}|(y,s)|^{-\alpha p}\,dyds\Big)^{\frac{1}{2}}\\
&\quad\lesssim 2^{j(n+1-\alpha p)/2}\|F_{\rho_1}\chi_{[2^j(\ell-1),2^j(\ell+\frac{1}{2}))}\|_{L_{y,s}^{2}(|(y,s)|^{\alpha p})}
\end{align*}
if $\alpha < (n+1)/p$.
Similar calculation gives
\begin{equation*}
\int_{2^j\ell}^{2^j(\ell+1)}\|G_{\rho_2}(\cdot,t)\|_{L_x^1}dt \lesssim 2^{j(n+1-\alpha p)/2}\|G_{\rho_2}\chi_{[2^j\ell,2^j(\ell+1))}\|_{L_{x,t}^{2}(|(x,t)|^{\alpha p})}.
\end{equation*}
By applying these estimates to \eqref{AfterPH} and then using the Cauchy-Schwarz inequality in $\ell$, we now see that
\begin{align}\label{AfterPH2}
&\sum_{\ell\in\mathbb{Z}}\sum_{\rho_1,\rho_2\in\mathbb{Z}^n}\int_{2^j\ell}^{2^j(\ell+1)}\int_{t-I_j}\Big|\Big<|\nabla|^{2\sigma}e^{i(t-s)\sqrt{1-\Delta}}P_k^2 F_{\rho_1}(\cdot,s),G_{\rho_2}(x,t)\Big>_{L_x^2}\Big|\,dsdt\notag\\
& \quad \lesssim C_{k,\sigma}2^{j(n+1-\alpha p)}\sum_{\rho_1,\rho_2\in\mathbb{Z}^n}C_{\rho_1,\rho_2}(j)\|F_{\rho_1}\|_{L_{y,s}^2(|(y,s)|^{\alpha p})}\|G_{\rho_2}\|_{L_{x,t}^2(|(x,t)|^{\alpha p})}.
\end{align}
When $|\rho_1 - \rho_2|<4\sqrt{n}$, the sum in the right-hand side of \eqref{AfterPH2} is bounded by the Cauchy-Schwarz inequality in $\rho_1$,
as follows:
\begin{align*}
&\sum_{\{\rho_1,\rho_2:|\rho_1-\rho_2|<4\sqrt{n}\}}C_{\rho_1,\rho_2}(j)\|F_{\rho_1}\|_{L_{y,s}^2(|(y,s)|^{\alpha p})}\|G_{\rho_2}\|_{L_{x,t}^2(|(x,t)|^{\alpha p})}\\
& \quad\le 2^{-\frac{nj}{2}} \Big(\sum_{\rho_1} \|F_{\rho_1}\|_{L_{y,s}^2(|(y,s)|^{\alpha p})}^2\Big)^{\frac{1}{2}}\Big(\sum_{\rho_1}\Big(\sum_{\{\rho_2:|\rho_1-\rho_2|<4\sqrt{n}\}}\|G_{\rho_2}\|_{L_{x,t}^2(|(x,t)|^{\alpha p})}\Big)^2\Big)^{\frac{1}{2}}.
\end{align*}
Since the number of $\rho_2$ satisfying $|\rho_1 - \rho_2|<4\sqrt{n}$ for fixed $\rho_1$ is at most finite,
the right-hand side in the above inequality is bounded by
\begin{equation*}
C2^{-\frac{nj}{2}} \|F\|_{L_{y,s}^2(|(y,s)|^{\alpha p})}\|G\|_{L_{x,t}^2({|(x,t)|^{\alpha p}})}.
\end{equation*}
Combining this and \eqref{AfterPH2} implies the desired estimate \eqref{reddu} in this case.
For the case where $|\rho_1 - \rho_2| \ge 4\sqrt{n}$, we divide cases into two parts as
\begin{align*}
\sum_{\{\rho_1,\rho_2:|\rho_1-\rho_2|\ge 4\sqrt{n}\}}&C_{\rho_1,\rho_2}(j)\|F_{\rho_1}\|_{L_{y,s}^2(|(y,s)|^{\alpha p})}\|G_{\rho_2}\|_{L_{x,t}^2(|(x,t)|^{\alpha p})}\\
&=\sum_{\{\rho_1,\rho_2:|\rho_1-\rho_2|\ge 4\sqrt{n},|\rho_1|\ge 1 \}} + \sum_{\{\rho_1,\rho_2:|\rho_1-\rho_2|\ge 4\sqrt{n},|\rho_1|<1 \}}.
\end{align*}
By H\"{o}lder's inequality in $\rho_2$ and then Young's inequality, the first term is bounded as
\begin{align*}
2^{-10nj} &\Big\|\sum_{\{\rho_1:|\rho_1|\geq 1\}}(|\rho_1 - \rho_2|)^{-10n} \|F_{\rho_1}\|_{L_{y,s}^2(|(y,s)|^{\alpha p})}\Big\|_{l^2} \Big(\sum_{\rho_2\in\mathbb{Z}^n} \|G_{\rho_2}\|_{L_{x,t}^2(|(x,t)|^{\alpha p})}\Big)^{\frac{1}{2}}\notag\\
&\le 2^{-10nj}\Big(\sum_{\{\rho_1:|\rho_1|\geq 1\}} |\rho_1|^{-10n}\Big)\Big(\sum_{\rho_1\in\mathbb{Z}^n}\|F_{\rho_1}\|_{L_{y,s}^2(|(y,s)|^{\alpha p})}\Big)^{\frac{1}{2}} \|G\|_{L_{x,t}^2({|(x,t)|^{\alpha p}})}\notag\\
&\le C2^{-10nj}\|F\|_{L_{y,s}^2(|(y,s)|^{\alpha p})}\|G\|_{L_{x,t}^2({|(x,t)|^{\alpha p}})},
\end{align*}
while the second term is bounded as
\begin{align*}
&\sum_{\{\rho_2:|\rho_2|\geq 4\sqrt{n}\}}(2^{j}|\rho_2|)^{-10n}\|F_0\|_{L_{y,s}^2(|(y,s)|^{\alpha p})}\|G_{\rho_2}\|_{L_{x,t}^2(|(x,t)|^{\alpha p})}\notag\\
&\qquad\le 2^{-10nj}\|F_0\|_{L_{y,s}^2(|(y,s)|^{\alpha p})} \Big(\sum_{\{\rho_2:|\rho_2|\geq 1\}} |\rho_2|^{-10n}\Big)^{\frac{1}{2}}\Big(\sum_{\rho_2\in\mathbb{Z}^n}\|G_{\rho_2}\|_{L_{x,t}^2(|(x,t)|^{\alpha p})}\Big)^{\frac{1}{2}}\notag\\
&\qquad\le C2^{-10nj}\|F\|_{L_{y,s}^2(|(y,s)|^{\alpha p})}\|G\|_{L_{x,t}^2({|(x,t)|^{\alpha p}})}.
\end{align*}
Therefore, we get \eqref{reddu} in this case as well.

\end{document}